\newcommand{\be}{\begin{equation}}
\newcommand{\ee}{\end{equation}}
\newcommand{\bes}{\begin{equation*}}
\newcommand{\ees}{\end{equation*}}
\newcommand{\bea}{\begin{eqnarray}}
\newcommand{\eea}{\end{eqnarray}}
\newcommand{\beas}{\begin{eqnarray}}
\newcommand{\eeas}{\end{eqnarray}}
\newcommand{\ben}{\begin{note}}
\newcommand{\een}{\end{note}}
\newcommand{\bexl}{\vskip0.1em\noindent\hrulefill\vskip1em\begin{ExerciseList}}
\newcommand{\eexl}{\end{ExerciseList}\hrulefill}
\newcommand{\bthm}{\begin{theorem}}
\newcommand{\ethm}{\end{theorem}}
\newcommand{\bpro}{\begin{prop}}
\newcommand{\epro}{\end{prop}}
\newcommand{\bcor}{\begin{corollary}}
\newcommand{\ecor}{\end{corollary}}
\newcommand{\bcon}{\begin{conjecture}}
\newcommand{\econ}{\end{conjecture}}
\newcommand{\bp}{\begin{proof}}
\newcommand{\ep}{\end{proof}}
\newcommand{\blem}{\begin{lemma}}
\newcommand{\elem}{\end{lemma}}
\newcommand{\bn}{\begin{note}}
\newcommand{\en}{\end{note}}
\newcommand{\benum}{\begin{enumerate}}
\newcommand{\eenum}{\end{enumerate}}
\newcommand{\bed}{\begin{defn}}
\newcommand{\eed}{\end{defn}}
\newcommand{\brem}{\begin{remark}}
\newcommand{\erem}{\end{remark}}
\newcommand{\btik}{\begin{tikzpicture}\begin{axis}[scale=0.5,axis y line=center, axis x line=middle]}
\newcommand{\etik}{\end{axis}\end{tikzpicture}}
\let\into=\hookrightarrow
\let\cong=\equiv
\newcommand{\upperRomannumeral}[1]{\uppercase\expandafter{\romannumeral#1}}
\let\cite=\citep
\let\Cref=\zcref
\newcommand{\Q}{{\mathbb Q}}
\newcommand{\R}{{\mathbb R}}
\newcommand{\Z}{{\mathbb Z}}
\newcommand{\C}{{\mathbb C}}
\renewcommand{\O}{{\mathcal O}}
\renewcommand{\H}{{\mathbb H}}
\let\implies=\Longrightarrow
\newcommand{\spec}{{\rm Spec}}
\newcommand{\End}{\rm End}
\newtheorem{theorem}[equation]{Theorem}\newtheorem{lemma}[equation]{Lemma}\newtheorem{corollary}[equation]{Corollary}\newtheorem{proposition}[equation]{Proposition}
\newtheorem{prop}[equation]{Proposition}
\theoremstyle{definition}
\newtheorem{conj}[equation]{Conjecture}
\newtheorem{conjecture}[equation]{Conjecture}
\newtheorem{remark}[equation]{Remark}
\newtheorem{example}[equation]{Example}
\newtheorem{definition}[equation]{Definition}
\newtheorem{defn}{Definition}
\newtheorem{question}[equation]{Question}
\numberwithin{equation}{subsection}
\begin{document}
\numberwithin{equation}{subsection}
\newcommand{\ra}{\rightarrow}

\title{On primes of ordinary and Hodge-Witt reduction}
\author{Kirti Joshi}

\address{Department of
Mathematics, University of Arizona, 617 N Santa Rita, P O Box 210089,
Tucson, AZ 85721, USA.}

\subjclass{Primary 14-XX; Secondary 11-XX}

\let\tensor=\otimes
\newcommand{\Hom}{{{\rm Hom}}}
\newcommand{\Spec}{{{\rm Spec}}}
\let\tensor=\otimes
\let\isom=\equiv \renewcommand{\P}{{\mathbb P}}
\let\liminv=\varprojlim
\renewcommand{\wp}{\mathfrak{p}}
\newcommand{\wq}{\mathfrak{q}}
\newcommand{\dor}[2]{\delta_{#1,#2}^{\rm ord}}
\newcommand{\dhw}[2]{\delta_{#1,#2}^{\rm hw}}
\newcommand{\dnhw}[2]{\delta_{#1,#2}^{\rm non-hw}}
\newcommand{\dora}[1]{\delta_{#1}^{\rm ord}}
\newcommand{\dhwa}[1]{\delta_{#1}^{\rm hw}}
\newcommand{\dnhwa}[1]{\delta_{#1}^{\rm non-hw}}
\let\cong=\equiv
\newcommand{\gal}{{\rm Gal}}
\newcommand{\frob}{{\rm Frob}}

\newcommand{\br}{\begin{remark}}
\newcommand{\er}{\end{remark}}
\newcommand{\benumlab}{\begin{enumerate}[label={{\bf(\arabic{*})}}]}
\newcommand{\mustata}{Musta\c t\v a}
\begin{abstract}
Jean-Pierre Serre has conjectured \Cref{serre}, in the context of abelian varieties, that there are infinitely primes of good ordinary reduction for a smooth, projective variety over a number field. We prove this conjecture for K3 surfaces \Cref{th:positive-density} (this is unpublished joint result with C.~S.~Rajan which was also independently established by Fedor Bogomolov and Yuri Zarhin by a different method). Any prime of ordinary reduction is also a prime of Hodge-Witt reduction but not conversely. \Cref{con:positive-density-hodge-witt} (of Joshi-Rajan) asserts the existence of infinitely many primes of Hodge-Witt reduction.  The two conjectures are related but not equivalent (\Cref{th:equivalence}). We prove the latter conjecture for abelian threefolds \Cref{th:positive-density-hodge-witt-abelian-threefold} (joint with C.~S. Rajan),  and smooth Fano threefolds (\Cref{th:fano-threefold}) and in \Cref{th:CM-ab-general} for abelian varieties with complex multiplication. We show that the set of primes of ordinary and Hodge-Witt reduction can have different densities (\Cref{thm:cm-hodge-witt}, \Cref{elliptic-curve-example}).    \Cref{th:ord-wonderful-compact},  \Cref{th:ord-config-spaces} establish the existence of ordinary reductions for certain wonderful compactifications and a large class of configuration spaces. \Cref{th:jr-ms} deals with the relationship between \Cref{con:positive-density-hodge-witt}, \Cref{serre} and the \mustata-Srinivas conjectures and \Cref{cor:mustata-srinivas-cm} establishes this conjecture in some cases. \Cref{se:non-hw} deals with existence of primes of non Hodge-Witt reductions and establishes this in a number of cases (\Cref{th:fermat-non-hw1}), and \Cref{th:frequency-of-non-hodge-witt-cm} asserts that for Fermat hypersurfaces of dimension $\geq 3$ and degrees $\geq 211$, at least $98\%$ of the primes are of non Hodge-Witt (and hence non-ordinary) reduction and in the degree $\to\infty$ limit, almost all primes are of non Hodge-Witt (and hence non-ordinary) reduction. \Cref{se:examples} provides a number of examples illustrating densities of ordinary, Hodge-Witt and non Hodge-Witt reductions.
\end{abstract}

\maketitle
\tableofcontents
\epigraph{\includegraphics[scale=.7]{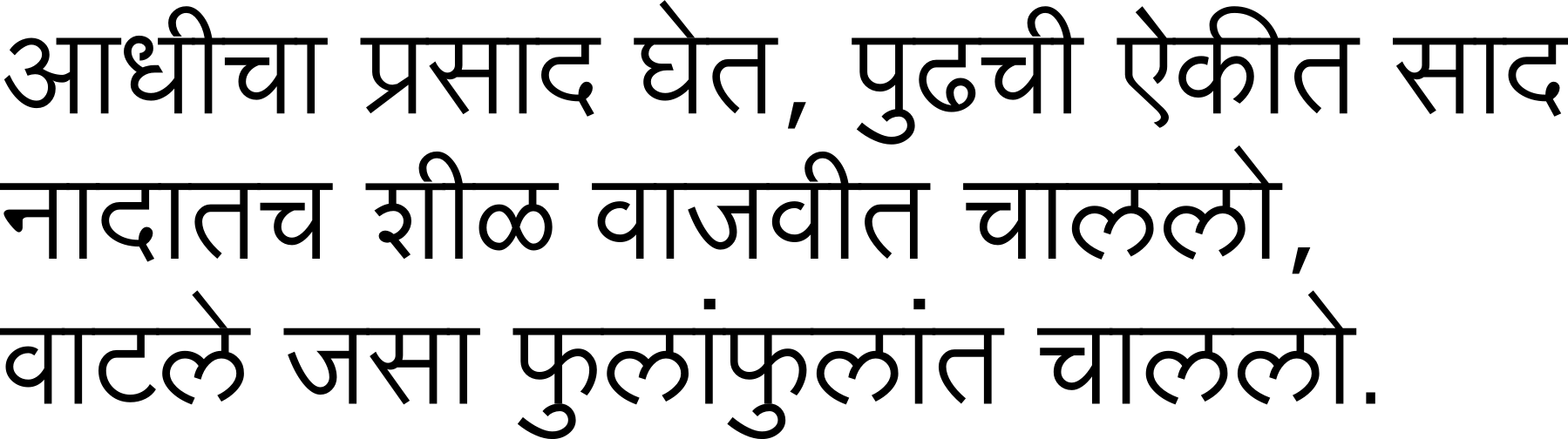}}{\includegraphics[scale=.7]{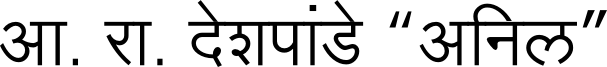} }
\renewcommand{\leftmark}{On Ordinary and Hodge-Witt reduction}
\renewcommand{\rightmark}{Kirti Joshi}

\section{Introduction}\label{intro}
\subsection{Motivating questions}
 Let $k$ be a perfect field of characteristic $p>0$. An abelian
variety $A$ over $k$ is said to be \emph{ordinary} if the $p$-rank of $A$ is
the maximum possible, namely equal to the dimension of $A$. The notion
of ordinarity was extended in \cite{Mazur1972}, to a smooth
projective variety $X$ over $k$, using notions from crystalline
cohomology. A more general definition was given by in
\cite[Definition 7.2]{bloch1986}, \cite[IV, D\'efinition 4.12]{Illusie1983a} using coherent
cohomology.   
Ordinary varieties tend to have special
properties and play a key role in the study of varieties over fields of
characteristic $p>0$. For example,  the existence of canonical Serre-Tate lifting
for ordinary abelian varieties to characteristic $0$; the comparison
theorems between crystalline cohomology and $p$-adic \'etale
cohomology were first established for such varieties \cite{bloch1986}; ordinarity is an open condition on the base \cite{illusie90a} and so on.  

One of the  motivating questions for this paper is \Cref{serre}, which is commonly attributed to Jean-Pierre Serre (see \cite{ogus82}), and as far as I am aware, it was  originally formulated for
abelian varieties, and it asserts the following: let $K$ be a number field, and let $X$ be a smooth, projective variety defined over $K$. Then there exists  a positive density of primes of
$K$, at which $X$ has ordinary reduction.   

 The class of Hodge-Witt varieties (\Cref{def:hw}) was introduced in \cite[IV, 4.6]{Illusie1983a}, and are characterized as varieties whose  de Rham-Witt cohomology groups $H^i(X,
 W\Omega^j_X)$ are finitely generated over ring of Witt vectors $W$ of the base field, and this class  strictly contains the class of ordinary varieties  (for the equivalent characterizations of Hodge-Witt abelian varieties see \ref{thm:cm-hodge-witt-3}). While it has been understood that ordinary varieties have different geometrical properties from general varieties, it is not commonly understood that Hodge-Witt varieties also have distinct geometrical properties from varieties which are non-Hodge-Witt.  An example of this behavior is implicit in the work of Chai-Conrad-Oort (see \cite{conrad-book}), where the term `Hodge-Witt' is not mentioned, but some of the lifting/non-lifting phenomena studied therein are dependent on whether the abelian variety in question is Hodge-Witt or non-Hodge-Witt. Hence, we provide a translation between the point of view of \cite{conrad-book} and our point of view in Theorem~\ref{thm:trans-conrad-book}. 

In \Cref{se:k3-ord}, we establish the existence of ordinary reductions given by \Cref{serre} for several classes of varieties. Ordinarity of reductions of certain wonderful compactifications   and configuration spaces arising from varieties considered in \Cref{se:k3-ord} are established in \Cref{th:ord-wonderful-compact}, \Cref{th:ord-config-spaces}, and their corollaries respectively.

Generalizing Serre's ordinarity \Cref{serre}, we conjecture in \Cref{con:positive-density-hodge-witt}, that a smooth, projective variety $X/K$ over a number field
$K$, there exist infinitely many primes of Hodge-Witt reduction.  This conjecture does not appear to have been studied before, and we investigate it in \Cref{se:hodge-witt-reduction}. \Cref{serre} of Serre and our \Cref{con:positive-density-hodge-witt} are not equivalent as there exists a smooth, projective varieties over $\Q$ such that the sets of primes provided by Conjecture~\ref{serre} and \Cref{con:positive-density-hodge-witt} are of different densities (see Example~\ref{elliptic-curve-example}). But the two Conjectures are almost equivalent and the precise relationship between the two conjectures is Theorem~\ref{th:equivalence}.  \Cref{elliptic-curve-example}, \Cref{elliptic-curve-example2}  show that the two conjectures provides primes of different densities even though they are related. 

Next we give some evidence for \Cref{con:positive-density-hodge-witt} for smooth, projective varieties $X/K$ by proving it in the following cases:
\benumlab 
\item $X$ is a surface with $p_g=0$ (\Cref{pgtheorem}),
\item $X$ is a Fano surface (\Cref{fanotheorem}) or $X/K$ is  Fano threefold over a number field (\Cref{th:fano-threefold}),
\item  $X$ is an  abelian threefold over number
fields \Cref{th:positive-density-hodge-witt-abelian-threefold} (this is joint work with C.~S.~Rajan),
\item $X/K$ is an  abelian variety with Complex Multiplication \Cref{th:CM-ab-general} (and \Cref{thm:cm-hodge-witt-2}). This requires proving a criterion (\Cref{thm:cm-hodge-witt}) for Hodge-Witt reductions of a simple abelian variety with CM.
\item Fermat hypersurfaces in \Cref{th:fermat-non-hw1}{\bf(1)}.
\eenum

In \cite{mustata12a,mustata12b}, the existence of ordinary reductions is related to a relationship between test ideals and multiplier ideals (of subschemes) and this  has also led to some interest in ordinary varieties. In \Cref{th:jr-ms} we spell out the relationships between \Cref{serre} (Serre), \Cref{con:positive-density-hodge-witt} (Joshi-Rajan) and \cite[Conjectures 1.1 and 1.2]{mustata12a}.  Corollary~\ref{cor:mustata-srinivas-cm} asserts that the conjecture of \mustata-Srinivas (see \cite[Conjecture~1.2]{mustata12a,mustata12b}) is true for a number of configuration spaces arising from some varieties whose ordinarity is established in this paper (including abelian varieties with complex multiplication,  and Fermat varieties of general type).  As far as I am aware, this is the only general result known in the direction of \cite[Conjecture~1.2]{mustata12a,mustata12b}.

Now let discuss the main results of \Cref{se:k3-ord}.
Some of the results of \Cref{se:k3-ord} were included in my  
preprint \cite[Section~6 and 7]{joshi00a-arxiv-version} with C.~S.~Rajan. However, when that preprint was
accepted for publication in the International Math. Res. Notices
(see \cite{joshi03}), the referee suggested the removal of these sections as Theorem~\ref{th:positive-density}(1)  might be well-known to experts and at any rate the contents
of sections 6,7 of that preprint be published independently because the
results contained in these two sections were independent of the main
results of that paper (namely construction of examples of Frobenius
split non-ordinary varieties). Hence, the results of this \Cref{se:k3-ord} do not appear in \cite{joshi03} and remained unpublished. Subsequently  (Theorem~\ref{th:positive-density}(1)) was proved by Fedor~Bogomolov and Yuri~Zarhin (see \cite{bogomolov09}) by a different method. Both the approaches (ours and that of \cite{bogomolov09}) are important. For a non-trivial geometric application of Theorem~\ref{th:positive-density}(1) see the S\'eminaire Bourbaki lecture \cite{benoist15} on the construction of rational curves on K3 surfaces by \cite{bogomolov11}, \cite{liedtke12}, and others. 
Our methods are different from those of \cite{bogomolov09}, \cite{tankeev95} and proves, in addition to establishing the existence of primes of ordinary reduction for K3 surfaces, a crucial step (see Theorem~\ref{th:positive-density}(2)) in the proof of  existence of primes of Hodge-Witt reduction for abelian threefolds (\Cref{th:positive-density-hodge-witt-abelian-threefold})--thus verifying  \Cref{con:positive-density-hodge-witt} for all abelian threefolds. This is certainly a new result of independent interest. The proofs of these two results are so closely linked that it would seem artificial to separate them. This is our rationale for publishing these hitherto unpublished results with C.~S.~Rajan.  

For purposes of citation, we provide joint attributions to results obtained with C.~S.~Rajan; all the unattributed or uncited results are mine.

In \Cref{se:k3-ord}, we have also added to the results of the aforementioned sections of \cite{joshi00a-arxiv-version},  Theorem~\ref{th:equivalence} which clarifies the precise relationship between Conjecture~\ref{serre} and \Cref{con:positive-density-hodge-witt}

In \Cref{se:non-hw}, we study the subtler question of the presence of infinite torsion in the de Rham-Witt cohomology of varieties and discuss some questions and conjectures.  In particular,  \Cref{con:non-hodge-witt}{\bf(1)} asserts that for any smooth, projective variety $X$ of dimension $n=\dim(X)$ over a number field $K$,  there exist infinitely many primes of $K$ at which $X$ has good, non Hodge-Witt reduction. Conjecture~\ref{con:non-hodge-witt}{\bf(2)} asserts that if  $H^n(X,\O_X)\neq0$, then the de Rham-Witt differential $H^n(X_\wp,W(\O_{X_\wp}))\to H^n(X_\wp,W\Omega^1_{X_\wp})$ (of the reduction $X_\wp$) is not zero (and hence $H^n(X_\wp,W(\O_{X_\wp}))$ has infinite torsion) for a positive density of primes $\wp$ of $K$ (note that Conjecture~\ref{con:non-hodge-witt}{\bf(2)}$\implies$Conjecture~\ref{con:non-hodge-witt}{\bf(1)}). As we note in \Cref{con:non-hodge-witt-implies-elkies-thm}{\bf(1)}, Conjecture~\ref{con:non-hodge-witt}{\bf(1)} includes, as a special case, the conjecture that there exists infinitely many primes of good, supersingular reduction for any elliptic curve over $K$ (if $K\into \R$, this is the well-known theorem of \cite{elkies87,elkies89}), and  also  implies (see \Cref{con:non-hodge-witt-implies-elkies-thm}{\bf(2)})  that any pair of  elliptic curves over \textit{any} number field $K$ have infinitely many common primes of supersingular reduction, and in \Cref{pr:ab-var-non-hw}  any abelian variety of dimension $g$ over a number field has infinitely many primes where the reduction $p$-rank $\leq  g-2$. In \Cref{th:fermat-non-hw1} we show \Cref{con:non-hodge-witt} is true for that Fermat hypersurfaces of dimension two or three and all simple abelian varieties with complex multiplication

Perhaps the most counter-intuitive assertion which we prove in the context of \Cref{con:non-hodge-witt} is \Cref{th:frequency-of-non-hodge-witt-cm}, where we show that for any smooth, Fermat hypersurface $F_{n,m}\subset \P^{n+1}$ (defined over $\Q$) of dimension $n\geq 3$ and degree $m\geq 211$, the density of primes of non-Hodge-Witt reduction is at least $\geq 98\%$ (and in the limit $m\to\infty$, the density of primes of non Hodge-Witt reduction is equal to one). In particular, \Cref{con:non-hodge-witt} is true for $F_{n,m}$ (\Cref{th:fermat-non-hw1}). Thus,  even for the familiar and extensively studied class of varieties, there is a surprising prevalence of primes of non Hodge-Witt reduction i.e. most primes are primes of non-Hodge-Witt reduction (and hence also of non-ordinary reduction). 

In \Cref{se:examples} we provide several explicit examples and density calculations/estimates  for all the phenomena studied in this paper: Fermat hypersurfaces (\Cref{ss:fermat}, \Cref{ss:prevalence}), \Cref{ss:abvar-gen-typ-surf} for abelian varieties (\Cref{elliptic-curve-example}, \Cref{elliptic-curve-example2}, \Cref{hodge-witt-cm-galois-example}, \Cref{hodge-witt-cm-example}) and some surfaces of general type (\Cref{surf-gen-type}) and abelian threefolds (\Cref{ex:ab-thrfld}), abelian fourfolds (\Cref{ex:ab-fourfold}, \Cref{ex:ab-fourfold2}) and  high-dimensional varieties $X$ of general type (\Cref{ex:gen-type-negligible-den}) with $0<\delta^{\rm ord}_X<\delta^{\rm hw}_X<\varepsilon$, while $\delta^{hw}_X/\delta^{ord}_X> c>1$ (for any given real numbers $0<\varepsilon<1$ and $c>1$).

\subsection{Acknowledgments} \Cref{se:k3-ord} was  written while I was visiting, the Tata Institute of Fundamental Research (a very long time ago) and support and hospitality of the institute is gratefully acknowledged. I thank Vikram Mehta for conversations during that period. I thank C.~S.~Rajan for conversations and for allowing us to include our previously unpublished joint results here. I would also like to thank Karl Schwede and Kevin Tucker, organizers of the 2011 AIMS  workshop on ``Relating test ideals and multiplier ideals'' for giving me an opportunity to speak on results of \cite{joshi00a-arxiv-version}. We decided to revise the unpublished sections of \cite{joshi00a-arxiv-version} because of this workshop and the support and hospitality of AIMS is gratefully acknowledged. 

It is a pleasure to thank the Referee for spotting errors, inadequacies of the previous version(s) and suggesting  corrections, a number of improvements and \Cref{ex:ab-fourfold}.

\section{Preliminaries}\label{preliminaries}
\subsection{Ordinary varieties}\label{ordinary-subsection}
Let $X$ be a smooth projective variety over a perfect field $k$ of
positive characteristic.  Following Bloch-Kato \cite{bloch1986} and
Illusie-Raynaud \cite{Illusie1983a},  we say that $X$ is ordinary  if
\be\label{eq:ord} H^i(X,B^j_X)=0\text{ for all }i\geq 0,\, j>0,\ee where  \[ B^j_X= {\rm
image}\left( d:\Omega^{j-1}_X\to \Omega^j_X\right).\] If $X$ is an
abelian variety, then it is known that this definition  coincides with
the usual definition \cite[Proposition 7.3 and Example 7.4]{bloch1986}.    By
\cite[Proposition 1.2]{illusie90a}, ordinarity is an  open condition in the
following sense: if $X\to S$ is a smooth, proper family of varieties
parameterized by $S$, then the set of points $s$ in  $S$, such that the
fiber $X_s$ is ordinary is a Zariski open subset of $S$.

Following \cite{mehta1985}, one says that  $X$ is \textit{Frobenius split} (or \textit{$F$-split}) if the exact sequence of $\O_X$-modules
\begin{equation}\label{eq:f-seq}
0\to \O_X\to F_*(\O_X) \to B^1_X\to 0.
\end{equation} splits. An abelian variety $X$ is Frobenius split i.e. $F$-split if and only if $X$ is an ordinary abelian variety. For the relationship between ordinarity and Frobenius splitting see \cite{joshi03}.

\subsection{Hodge-Witt varieties}
   The standard reference for de Rham-Witt cohomology is
\cite{illusie79b}. Throughout this section,  the following notations
will be in force. Let $k$ be an algebraically closed field of
characteristic $p>0$, and $X$ a smooth, projective variety over $k$.
Let $W=W(k)$ be the ring of Witt vectors of $k$. Let $K=W[1/p]$ be
the quotient field of $W$. Note that as $k$ is perfect, $W$ is a
Noetherian local ring with a discrete valuation and with residue
field $k$. For any $n\geq 1$, let $W_n=W(k)/p^n$. $W$ comes equipped
with a lift $\sigma:W\to W$, of the Frobenius morphism of $k$, which
will be called the Frobenius of $W$. We define a non-commutative
ring $R^0=W_\sigma[V,F]$, where $F,V$ are two indeterminate subject
to the relations $FV=VF=p$ and $Fa=\sigma(a)F$ and $aV=V\sigma(a)$.
The ring $R^0$ is called the Dieudonne ring of $k$. The notation $R^0$ for this ring is
borrowed from \cite{Illusie1983a}.

     Let $\{W_n\Omega^*_X\}_{n\geq 1}$ be the de Rham-Witt pro-complex
constructed in \cite[Chapitre I]{illusie79b}. It is standard that for each
$n\geq 1,i,j\geq 0$, $H^i(X,W_n\Omega^j_X)$ are of finite type over
$W_n$. We define \cite[II, Proposition 2.1]{illusie79b}
\begin{equation}\label{eq:drcohom}
    H^i(X,W\Omega^j_X)=\liminv_{n}H^i(X,W_n\Omega^j_X).
 \end{equation} 
The groups defined by \eqref{eq:drcohom} are $W$-modules, and modulo  torsion, these groups are finite type $W$-modules (see \cite[II, Th\'eor\`eme 2.13]{illusie79b}). The cohomology groups \eqref{eq:drcohom} are called \textit{Hodge-Witt cohomology groups  of
$X$}.

\begin{definition}\label{def:hw} We say that $X$ is Hodge-Witt, if for $i,j\geq 0$,
the Hodge-Witt cohomology
 groups $ H^i(X,W\Omega^j_X)$ are finite type $W$-modules.
\end{definition}
\newcommand{\mydot}{{\scriptsize\bullet}}
The properties of the de Rham-Witt pro-complex $\left\{W_n\Omega_X^*\right\}_{n\geq 1}$ are reflected  in
these cohomology modules and in particular, we note that for each
$i,j$, the Hodge-Witt groups $H^i(X,W\Omega_X^j)$ are left modules
over $R^0$. The complex $W\Omega^*_X$,  defined naturally
using the de Rham-Witt pro-complex, computes the crystalline
cohomology of $X$ \cite[II, Th\'eor\`eme 1.4]{illusie79b} and in particular, there is a spectral sequence
\begin{equation}\label{eq:slope-ss}
   E_1^{i,j}=H^i(X, W\Omega^j_X)\Rightarrow H^*_{\rm cris}(X/W)
\end{equation}
which is called the \textit{slope spectral sequence of $X$} \cite[II, 3.1]{illusie79b}.
The slope spectral sequence \eqref{eq:slope-ss} induces a filtration on the crystalline
cohomology $H^*_{cris}(X/W)$ of $X$ which is called the \textit{slope filtration} (\cite[II, 3.1.2]{illusie79b}). It is standard  that the slope spectral sequence degenerates at
$E_1$ modulo torsion (i.e. the differentials are zero on tensoring
with $K$) (see \cite[II, Theorem~3.2]{illusie79b}) and at $E_2$ up to finite length (i.e. all the
differential have images which are of finite length over W) (see 
\cite[Chapter II, Corollary 3.2]{Illusie1983a}).

For a general, smooth proper variety $X$, the de Rham-Witt cohomology groups are
not of finite type over $W$, and the structure of these groups
reflects the arithmetical properties of $X$ \cite[II, 7.2]{illusie79b}. For instance, in
\cite{bloch1986}, \cite[Chapitre IV, Th\'eor\`eme 4.13]{Illusie1983a} it is shown that for ordinary
varieties $H^i(X,W\Omega^j)$ are of finite type over $W$.

Let us also recall the following fact which holds in dimension one and
which will be used in the rest of the paper:

\bpro\label{pr:curves-are-hodge-witt} 
Let $X/k$ be a geometrically connected, smooth, proper curve over a perfect field of characteristic $p>0$. Then 
\benumlab
\item $X$ is ordinary if and only if its Jacobian is ordinary.
\item $X$ is (always) Hodge-Witt i.e. a smooth, projective curve over a perfect field of characteristic $p>0$ is  Hodge-Witt.
\eenum
\epro
\bp
The first assertion is certainly well-known to experts, but let me give a proof. Let $A$ be the dual of $J=Pic^0(X)$ i.e. $A$ is the Albanese variety of $X$. To prove {\bf(1)}, it is enough to prove that Frobenius morphism of $X$ on $H^1(X,\O_X)\to H^1(X,\O_X)$ is an isomorphism  if and only the Frobenius morphism of $A$ induces an isomorphism $H^1(A,\O_A)\to H^1(A,\O_A)$.  By \cite[Proposition 7.3 and Example 7.4]{bloch1986}, the  last assertion for $A$ is equivalent to the assertion that \eqref{eq:ord} holds for $A$. Now we have a natural functorial isomorphism $H^1(X,\O_X)\to H^1(A,\O_A)$. This follows from \cite[II, Proposition 5.16]{illusie79b}.

Now if $H^1(X,\O_X)\to H^1(X,\O_X)$ is an isomorphism, then cohomology applied to \eqref{eq:f-seq} shows that $H^1(X,B_X^1)=0$ and the fact that Frobenius of $X$ is a finite morphism shows $H^0(X,F_*(\O_X))=H^0(X,\O_X)=k$ and thus $H^0(X,B_X^1)=0$. Thus, $X$ is ordinary if and only $H^1(X,\O_X) \to H^1(X,\O_X)$ is an isomorphism if and only if \eqref{eq:ord} holds if and only if $A$ is ordinary if and only if $J$ is ordinary.

The assertion {\bf(2)} is less well-know but is immediate from \cite[II, Corollaire 2.17 et Collaire 2.18]{illusie79b}.
\ep

\section{Primes of ordinary reductions}\label{se:k3-ord} 
In this section, we discuss \Cref{serre}, due to Serre, on the existence of primes of good ordinary reductions; the main theorem \Cref{th:positive-density} (joint with Rajan) establishes the existence of primes of ordinary reductions for K3 surfaces over number fields. Other ordinarity results we establish in this paper are as follows. In \Cref{th:CM-ab-general}, \Cref{thm:cm-hodge-witt} we will establish the existence of ordinary reductions for abelian varieties with complex multiplication. In \Cref{th:ord-wonderful-compact} we establish the existence of ordinary reductions for a large class of wonderful compactifications, and this allows us to assert the existence of ordinary reductions for a large class of  configuration spaces in \Cref{th:ord-config-spaces}.

\subsection{Models}\label{ss:models}
Let $X$ be a smooth projective variety over a number field $K$. By an \emph{$\O_K$-model for $X/K$} or simply a \emph{model for $X/K$} we mean a  scheme $f:\mathcal{X}\to\Spec(\O_K)$  such that
\benumlab
\item  structure morphism $f$ is projective, and
\item  the generic fiber $\mathcal{X}_K=\mathcal{X}\times_{\Spec(\O_K)}\Spec(K)\simeq X$, and
\item there exists some non-empty Zariski open subset $\Spec(\O_K)-V(I) \subseteq\Spec(\O_K)$ (here $0\neq I\neq\O_K$ is an ideal of $\O_K$) over which $f$ is flat with smooth fibers.
\eenum 
Such models evidently exist (see \cite[Chapter 1]{bosch-neron-models}).  In what follows, if $\wp\in\Spec(\O_K)-V(I)$ is a prime ideal, then the fiber of this model over $\wp$ will be denoted by $X_{\wp}$. In what follows, one may further shrink the Zariski open subset $\Spec(\O_K)-V(I)$ given above by {\bf(3)} so that certain additional properties hold. Up to such shrinking, all the properties  and results considered here are independent of the choice of such a model.

\subsection{Serre's conjecture}
The following more general question, which is one of the
motivating questions for this paper, is the following conjecture which
is well-known and was raised initially for abelian varieties by Jean-Pierre Serre \cite{ogus82}:
\begin{conj}\label{serre}
Let $X/K$ be a smooth projective variety over a number field $K$. Then
there is a positive density of primes $v$ of $K$ for which $X$ has
good ordinary reduction at $v$.
\end{conj}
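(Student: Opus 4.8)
Since this is still an open conjecture, what follows is a strategy, not a proof; the body of the paper carries out only those steps that are currently feasible. The plan is to turn the geometric condition ``$X$ has ordinary reduction at $v$'' into an arithmetic condition on the characteristic polynomial of Frobenius, and then to feed this into an equidistribution statement for Frobenius conjugacy classes. First I would fix an auxiliary prime $\ell$ and, for each relevant cohomological degree $n$, consider the representation $\rho_{\ell,n}\colon\gal(\bar K/K)\to GL\bigl(H^n_{\mathrm{et}}(X_{\bar K},\Q_\ell)\bigr)$, with $G_{\ell,n}$ the Zariski closure of its image. For a prime $v$ of good reduction with residue characteristic $p\neq\ell$, the theorem of Katz--Messing identifies the characteristic polynomial $P_{v,n}(T)\in\Z[T]$ of $\rho_{\ell,n}(\frob_v)$ with that of crystalline Frobenius on $H^n_{\mathrm{cris}}(X_v/W)$; hence the slopes of Frobenius on $H^n_{\mathrm{cris}}(X_v/W)$, and so the Newton polygon of $X_v$ in degree $n$, are read off from the $p$-adic valuations of the coefficients of $P_{v,n}$. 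With the definition of ordinarity recalled in section~\ref{ordinary-subsection} (Bloch--Kato \cite{bloch86}, Illusie--Raynaud \cite{illusie83b}), ordinarity of $X_v$ forces --- and for abelian varieties and for K3 surfaces is equivalent to --- the coincidence, in every degree, of this Newton polygon with the fixed Hodge polygon of $X$, which in turn says that a prescribed coefficient of $P_{v,n}$ (for an abelian variety of dimension $g$, the middle coefficient of $P_{v,1}$; in general the coefficients at the Hodge break-points, or suitable minors) is a $p$-adic unit. Thus the conjecture is reduced to: for a positive-density set of $v$, a fixed finite collection of integers attached to $\rho_{\ell,n}(\frob_v)$ is coprime to $p=p(v)$.

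Next I would interpret ``coprime to $p(v)$'' as the condition that $\rho_{\ell,n}(\frob_v)$ lies in a Zariski-open nonempty subset $U_n\subset G_{\ell,n}$ --- the ``ordinary locus'', cut out by the nonvanishing of those minors --- the openness of $U_n$ being the group-theoretic shadow of the openness of ordinarity in families (\cite[Proposition 1.2]{illusie90a}, quoted above). Since $U_n$ is open and nonempty, any equidistribution statement for the Frobenius classes $\{\rho_{\ell,n}(\frob_v)\}_v$ --- a suitable Sato--Tate law for $X$, or, following Serre, a statement that the Frobenius tori attached to $X$ are large for a positive density of $v$ --- would place a positive density of these classes in $U_n$, hence produce the required positive-density set of ordinary $v$. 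This is precisely Serre's own reduction in the case of abelian varieties, and it is the shape of all the unconditional results: the classical case of elliptic curves, Ogus's case of abelian surfaces, the case of K3 surfaces proved here and in \cite{bogomolov09} (Theorem~\ref{positive-density}(1)), and the CM case (Theorem~\ref{thm:cm-hodge-witt}) each use extra structure --- low rank, the theory of the K3 lattice and the Kuga--Satake construction, or complex multiplication --- to pin down $G_{\ell,n}$ and to force the relevant coefficient of $P_{v,n}$ to be a $p$-adic unit for positively many $v$.

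The \emph{main obstacle} is exactly this last step for a general $X$. We do not in general know $G_{\ell,n}$ --- conjecturally it is an open subgroup of the Mumford--Tate group, but that is the Mumford--Tate conjecture --- and even granting it, no equidistribution or ``large Frobenius torus'' statement of the strength needed here is available for an arbitrary smooth projective variety; every unconditional result exploits special features of the motive in question. For this reason the paper does not attack the general case: instead it establishes the conjecture where such structure is present (CM abelian varieties in Theorem~\ref{thm:cm-hodge-witt} and Theorem~\ref{thm:cm-hodge-witt-2}, K3 surfaces in Theorem~\ref{positive-density}(1)), it studies the weaker and more frequently attainable notion of Hodge-Witt reduction together with the companion Conjecture~\ref{positive-density-hodge-witt}, and it makes the logical relationship between the two conjectures precise in Theorem~\ref{equivalence}.
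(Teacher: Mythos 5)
The statement you were asked about is Conjecture~\ref{serre} itself; the paper offers no proof of it, and indeed none is known. You correctly recognize this and present a strategy rather than a proof, which is the honest and appropriate response. Your outline --- reduce ordinarity to the coincidence of Newton and Hodge polygons via Katz--Messing and Mazur, hence to a $p$-adic unit condition on coefficients of the Frobenius characteristic polynomial, then invoke equidistribution of Frobenius classes --- is the standard Serre/Ogus template, and it accurately describes the shape of the special cases the paper does establish: the Ogus-style congruence argument for K3 surfaces and abelian surfaces in Theorem~\ref{positive-density} (Weil bounds plus a Chebotarev trick forcing $a_v=\pm dp$ and then a contradiction with Mazur's divisibility), and the CM case in Theorems~\ref{thm:cm-hodge-witt} and \ref{thm:cm-hodge-witt-2} (splitting of $p$ in the CM field plus Chebotarev).

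One imprecision in your sketch is worth flagging, because it is exactly where the difficulty lives. The condition that a prescribed coefficient of $P_{v,n}$ be a $p$-adic unit is \emph{not} a fixed Zariski-open condition on the monodromy group $G_{\ell,n}$: the prime $p=p(v)$ varies with $v$, so there is no single ``ordinary locus'' $U_n\subset G_{\ell,n}$ whose nonemptiness plus equidistribution would finish the argument. The openness result of \cite[Proposition 1.2]{illusie90a} concerns the base of a family in characteristic $p$, not the image of Galois. This is precisely why Serre's approach requires the finer formalism of Frobenius tori and their ``largeness'' for a density-one set of $v$ (the route of Noot, Pink, and Vasiu cited in the paper), rather than a bare Sato--Tate statement, and why even granting the Mumford--Tate conjecture the problem remains open in general. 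With that caveat, your assessment of what is proved, what is conjectural, and where the obstruction lies matches the paper.
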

Note that this question obviously requires a model as discussed in \Cref{ss:models}. From now on, we will assume that one has a model of the type considered in \Cref{ss:models} at our disposal.

\subsection{Primes of ordinary reduction for K3 surfaces}
Our aim in this section is
to prove the following theorem which was proved in \cite{joshi00a-arxiv-version}.
\begin{theorem}[Joshi-Rajan]\label{th:positive-density}
	Let $X$ be a $K3$ surface or an abelian variety of dimension at
	least two defined over a number field $K$. Then there is a finite
	extension $L/K$ of number fields, such that
	\begin{enumerate}
		\item if $X$ is a $K3$-surface, then $X\times_K L$ has
		ordinary reduction at a set of primes of density one in $L$.
		\item if $X$ is an abelian variety, then there is set of
		primes $O$ of density one in $L$, such that the reduction of
		$X\times_{K}L$ at a prime $p\in O$ has $p$-rank at least two.
	\end{enumerate}
\end{theorem}

\brem\  
\benumlab
\item Note that the set of primes given by \Cref{th:positive-density} is of density one in the set of primes of $L$, but the set of primes $K$ lying below these primes of $L$ is of positive density in the set of primes of $K$. This should not cause any confusion--common understanding of \Cref{serre} usually refers to density of the set of primes of $K$.
\item The proof closely follows the method of \cite[page 372]{ogus82}.
\item We note here that a proof of the result for a class of K3
surfaces was also given in \cite{tankeev95} under somewhat restrictive hypothesis. The question of primes of ordinary
reduction for abelian varieties has also been treated  by
\cite{noot95}, \cite{pink98}; and more \cite{vasiu00} has studied the question for a
wider class of varieties. The approach adopted by these authors is
through the study of Mumford-Tate groups. 
\item In \cite{bogomolov09}, \Cref{th:positive-density}{\bf(1)} (K3 surface case) was also independently established by a different method. 
\eenum
\erem

Until the end of this section, the following conventions will be in force unless stated otherwise: let $K$ be a number field, and let $X$ denote either an abelian
variety or a $K3$ surface defined over $K$.  Let $\O_K$ be the ring of integers of $K$; for a prime
$\wp$ of $\O_K$ lying above a rational prime $p$, $K_\wp$ be the completion of $K$ at $\wp$. For notational simplicity, let $\O_{\wp}=\O_{K_\wp}$ and let $k_{\wp}$ be the residue
field of $\wp$ and let  $q_\wp=p^{c_\wp}$ be the cardinality of $k_\wp$. Let $W(k_\wp)$ denote the ring of Witt vectors of $k_\wp$. Fix a model $\mathcal{X}\to\Spec(\O_K)$ as in \Cref{ss:models}. Assume that $\wp$ is a place
of good reduction for $X$ as above and write $X_\wp$ for the reduction
of $X$ at $\wp$.  We recall here the following facts:

\setcounter{subsubsection}{\value{equation}\refstepcounter{equation}}

\subsubsection{Trace of Frobenius}
   The Frobenius endomorphism $F_\wp$ is an  endomorphism
 of the $l$-adic cohomology groups $H^i_\ell:= H^i_{\acute{e}t}(X\otimes
 \overline{K}, \Q_\ell)$ for a prime $\ell\neq p$.  The $\ell$-adic
 characteristic polynomial $P_{i, \wp}(t)= {\rm det}(1-tF_\wp\mid
 H^i_{\acute{e}t}(X\otimes \overline{K}, \Q_\ell))$ is an integral polynomial and
 is independent of $\ell$. Let
\[ a_\wp={\rm Tr}(F_\wp\mid H^2_{\acute{e}t}(X\otimes \overline{K}, \Q_\ell))\]
denote the trace of the $\ell$-adic Frobenius acting on the second
\'etale cohomology group.  Then $a_\wp$ is a rational integer (see
\cite[Page 370]{ogus82}).

\setcounter{subsubsection}{\value{equation}\refstepcounter{equation}}

\subsubsection{Deligne Weil estimate} (Deligne-Weil estimates) \cite{deligne74a}:
It follows from Weil estimates proved by Weil for abelian varieties and
by Deligne in general  that
\[ |a_\wp|\leq dp\]
where $d=\dim H^2_\ell$ is a constant independent of the place $\wp$.

\setcounter{subsubsection}{\value{equation}\refstepcounter{equation}}

\subsubsection{Katz-Messing theorem}
 Let $\phi_\wp$ denote the
crystalline Frobenius on $H^i_{cris}(X/W(k_\wp))$. Then $\phi_\wp^{c_\wp}$ is
linear over $W(k_\wp)$ (here $q_\wp=p^{c_\wp}$ is the cardinality of the residue field $k_\wp$), and  the characteristic polynomials of
the (linearized) crystalline Frobenius and
the $\ell$-adic Frobenius are equal:
$$P_{i,\wp}(t)={\rm det}(1-t\phi_\wp^{c_\wp}\mid
H^i_{cris}(X/W(k_\wp))\otimes K_\wp),$$ (see \cite{katz74}).

\setcounter{subsubsection}{\value{equation}\refstepcounter{equation}}

\subsubsection{Semi-simplicity of the crystalline Frobenius}\label{crys-frob}
If $X$ is a $K3$-surface, then it follows from  \cite[Corollaire 1.10]{deligne1981}, \cite{deligne1971} and \cite[2.10 Lemma]{ogus82} that the crystalline Frobenius
$\phi_\wp^{c_\wp}$ is semi-simple.

\setcounter{subsubsection}{\value{equation}\refstepcounter{equation}}

\subsubsection{Mazur's theorem}
\numberwithin{equation}{subsubsection}
We recall from \cite{Mazur1972}, \cite{Mazur1973}, \cite{deligne1981},
\cite{berthelot-ogus} the following theorem of B.~Mazur. There are
two parts to the theorem of Mazur (\cite{Mazur1972,Mazur1973}) that we require, and we record them
separately for convenience. After inverting finitely many primes
$v\in S$ in $K$, we can assume that $X$ has good reduction outside
$S$. Using Proposition~\ref{fintor} (see below) we can assume that
$H^i(X, \Omega^j_X)$ and $H^i_{cris}(X/W(k_\wp))$ are torsion-free outside a
finite set of primes of $K$. As $X$ is defined over characteristic
zero, the Hodge to de Rham spectral sequence degenerates at $E_1$
stage. Thus, all the hypothesis of Mazur's theorem are satisfied. The
two parts of Mazur's theorem that we require are the following:

\subsubsection{Hodge and Newton polygons: Mazur's proof of  Katz's conjecture}\label{mazur}
For the definitions of the Hodge and Newton polygons, see \cite[Section 2]{Mazur1972} or \cite[Chapter 8]{berthelot-ogus}. 
Let $L_\wp$ be a finite extension of field of fractions of
$W(k_\wp)$, over which the polynomial $P_{i,\wp}(t)$ splits into linear
factors. Let $w$ denote a valuation on $L_\wp, $ such that
$w(p)=1$. The Newton polygon of the polynomial $P_{i,\wp}(t)$ lies above
the Hodge polygon in degree $i$,
 defined by the Hodge numbers $h^{j, i-j}$ of degree
$i$. Moreover, they have the same endpoints (see  \cite[Chapter 8, Theorem 8.36]{berthelot-ogus} or \cite[Theorem 1]{Mazur1972}).

\subsubsection{Divisibility} \label{divisibility}
The crystalline Frobenius $\phi_\wp$ is divisible by $p^i$ when
restricted to $$F^iH^j_{dR}(X/W(k_\wp)):= \H^j(X/W(k_\wp)), \Omega^{\geq
i}_X).$$ This is immediate from \cite[Theorem 8.26]{berthelot-ogus} or the main result of \cite[Section A, page 62]{Mazur1973}.

\subsubsection{Crystalline torsion}
We will also need the following proposition (from \cite{joshi00a-arxiv-version}) which is certainly
well-known but as we use it in the sequel, we record it here for
convenience.
\begin{proposition}\label{fintor}
Let $X/K$ be a smooth projective variety. Then for all but finitely
many non-archimedean primes $\wp$, the crystalline cohomology $H^i_{\rm
cris}(X_\wp/W(k_\wp))$ is torsion free for all $i$.
\end{proposition}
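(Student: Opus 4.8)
The plan is to realize $X_v$ as the special fibre of a smooth proper model over the $v$-adic integers, observe that the de Rham cohomology of such a model is locally free for all but finitely many $v$, and transport this back to crystalline cohomology via the comparison theorem of Berthelot--Ogus.

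First I would spread $X$ out: there is a positive integer $N$ and a smooth projective morphism $f\colon\mathcal{X}\to S$ with $S=\Spec\O_K[1/N]$ and generic fibre $X$. For a place $v$ of $K$ not dividing $N$, write $\O_v$ for the completion of $\O_K$ at $v$ --- a complete discrete valuation ring with perfect residue field $k_v$, flat over $\O_K[1/N]$ --- and set $\mathcal{X}_v=\mathcal{X}\times_S\Spec\O_v$; then $\mathcal{X}_v$ is smooth and proper over $\O_v$ with special fibre $X_v$. Consider the relative de Rham cohomology $H^i_{\rm dR}(\mathcal{X}/S)$, i.e. the hypercohomology of $\Omega^\bullet_{\mathcal{X}/S}$: it is a coherent module over the Dedekind ring $\O_K[1/N]$ (and nonzero for only finitely many $i$), and the torsion submodule of a coherent module over a Dedekind ring is supported at finitely many closed points. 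So after enlarging $N$ I may assume $H^i_{\rm dR}(\mathcal{X}/S)$ is torsion-free --- hence locally free --- for every $i$.

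Two standard facts then finish the proof. First, the formation of relative de Rham cohomology commutes with flat base change (the stupid filtration on $\Omega^\bullet_{\mathcal{X}/S}$ has flat graded pieces $\Omega^j_{\mathcal{X}/S}$ whose formation commutes with base change, and $Rf_*$ commutes with flat base change), so
\[
H^i_{\rm dR}(\mathcal{X}_v/\O_v)\;\cong\;H^i_{\rm dR}(\mathcal{X}/S)\otimes_{\O_K[1/N]}\O_v
\]
is free, in particular torsion-free, over $\O_v$. Second, the comparison theorem of Berthelot--Ogus \cite{berthelot-ogus} applied to the smooth proper $\O_v$-scheme $\mathcal{X}_v$ with special fibre $X_v/k_v$ gives a canonical isomorphism
\[
H^i_{\rm cris}(X_v/W(k_v))\otimes_{W(k_v)}\O_v\;\cong\;H^i_{\rm dR}(\mathcal{X}_v/\O_v);
\]
as $W(k_v)\hookrightarrow\O_v$ is a faithfully flat extension of discrete valuation rings, torsion-freeness of the right-hand side forces $H^i_{\rm cris}(X_v/W(k_v))$ to be torsion-free. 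Excluding in addition the finitely many $v$ dividing $N$ gives the proposition.

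The point I expect to require the most care is the comparison step when $v$ ramifies over the rational prime $p$ beneath it: the special fibre $X_v$ need not admit a smooth proper lift over the unramified ring $W(k_v)$, only over the possibly ramified $\O_v$, so one genuinely needs the Berthelot--Ogus comparison over a ramified complete discrete valuation ring, together with the routine verification that $H^i_{\rm dR}$ commutes with the base change $\O_K[1/N]\to\O_v$. An alternative that avoids the ramified comparison is to combine the universal-coefficient sequences
\[
0\to H^i_{\rm cris}(X_v/W(k_v))/p^n\to H^i_{\rm cris}(X_v/W_n(k_v))\to H^{i+1}_{\rm cris}(X_v/W(k_v))[p^n]\to 0
\]
(the case $n=1$ identifying $H^i_{\rm cris}(X_v/k_v)$ with $H^i_{\rm dR}(X_v/k_v)$) with the upper semicontinuity of $v\mapsto\dim_{k_v}H^i_{\rm dR}(X_v/k_v)$ coming from perfectness of $Rf_*\Omega^\bullet_{\mathcal{X}/S}$; this reduces everything to the equality $\dim_{k_v}H^i_{\rm dR}(X_v/k_v)=\dim_KH^i_{\rm dR}(X/K)$, which holds for all but finitely many $v$ because the left side is upper semicontinuous, agrees with the right side at the characteristic-zero generic point, and the generic value equals the rank of $H^i_{\rm cris}(X_v/W(k_v))$ by the Katz--Messing comparison of characteristic polynomials.
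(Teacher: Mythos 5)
Your argument is essentially the paper's own proof: spread $X$ out over an open subscheme of $\Spec(\O_K)$, use that the relative de Rham cohomology of the model is a coherent module over a Dedekind ring and hence torsion-free after inverting finitely many primes, and transfer this to crystalline cohomology via Berthelot's comparison theorem. The one step to correct is the displayed integral isomorphism $H^i_{\rm cris}(X_v/W(k_v))\otimes_{W(k_v)}\O_v\cong H^i_{\rm dR}(\mathcal{X}_v/\O_v)$ at a \emph{ramified} place $v$: the Berthelot--Ogus comparison over a ramified complete discrete valuation ring is only an isomorphism after inverting $p$, so as written this step is not justified. This costs nothing, however, because only the finitely many places dividing the discriminant of $K/\Q$ are ramified; discarding those as well, one has $\O_v=W(k_v)$ at every remaining place and the integral comparison $H^i_{\rm cris}(X_v/W(k_v))\cong H^i_{\rm dR}(\mathcal{X}_v/\O_v)$ holds, which is exactly what the paper's proof uses. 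Your universal-coefficients alternative (reducing to $\dim_{k_v}H^i_{\rm dR}(X_v/k_v)=\dim_K H^i_{\rm dR}(X/K)$ via Katz--Messing and semicontinuity) is also a correct route that sidesteps the comparison theorem entirely.
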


\begin{proof}
We choose a model $f:\mathcal{X}\to \Spec(\O_K)$ as in \Cref{ss:models}. By the projectivity of $f$, the relative de Rham cohomology of the model $\mathcal{X}$ is a finitely generated
$\O_K$-module and has bounded torsion.  After removing finite set
$S$ of primes of $\Spec(\O_K)$, we can assume that $H^i_{dR}({\mathcal X}/\O_{K,S})$ is a
torsion-free $\O_{K,S}$ module, where $\O_{K,S}\supseteq \O_K$ is the
ring of $S$-integers in $K$. We can enlarge $S$ by adding to it all the ramified primes of $K$ and hence assume that for all $\wp\not\in S$, one has $\O_{K,\wp}=W(k_\wp)$.

 By the comparison theorem of Berthelot
(see \cite[Chapter 7, Corollary 7.3]{berthelot-ogus}), there is a natural isomorphism of
the crystalline cohomology of $X_\wp$ to that of the de Rham cohomology
of the generic fiber of a lifting to $W(k_\wp)$:
\be\label{eq:comp-crys-derham} H^i_{\rm cris}(X_\wp/W(k_\wp)) \equiv H^i_{dR}({\mathcal X}/
\O_{K,S})\otimes W(k_\wp).\ee
This proves our proposition. 
\end{proof}

\brem
The proof of \Cref{fintor} also
shows that we can assume after inverting some more primes, that the
Hodge filtration terms $F^jH^i_{\rm dR}({\mathcal X}, \O_{K,S})$ are also
locally free over $\O_{K,S}$, such that the sub-quotients are also locally
free modules.
\erem
\numberwithin{equation}{subsection}
\subsection{Proof \Cref{th:positive-density}}
We first note the following lemma which is fundamental to the proof.
\begin{lemma}[Joshi-Rajan] With notation as above, assume the following:
\benumlab
\item if $X$ is a $K3$-surface, then $X$ does not have ordinary
reduction at $\wp$.
\item if $X$ is an abelian variety, then the $p$-rank of the reduction of
$X$ at $\wp$, is at most one.
\eenum
Then $p|a_\wp$.
\end{lemma}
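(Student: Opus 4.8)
The plan is to exploit the divisibility properties of the crystalline Frobenius coming from Mazur's theorem, and to combine them with the Katz-Messing comparison of characteristic polynomials. The key point is that the trace $a_v$ is computed, via Katz-Messing, as the trace of the linearized crystalline Frobenius $\phi_v^{c_v}$ acting on $H^2_{\rm cris}(X_v/W(k_v))\otimes K_v$, so it suffices to show that this trace is divisible by $p^{c_v}$ (hence by $p$) under the hypotheses of the lemma. Since we may, by Proposition~\ref{fintor}, assume $v$ avoids the finitely many bad primes, $H^2_{\rm cris}(X_v/W(k_v))$ is torsion-free and carries the Hodge filtration $F^\bullet$ with locally free graded pieces, with $F^1$ coming from $\H^2(X/W(k_v),\Omega^{\geq 1})$ and $F^2$ from $\H^2(X/W(k_v),\Omega^{\geq 2})$.

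**First** I would record the Hodge numbers in degree $2$: for a $K3$ surface $h^{2,0}=h^{0,2}=1$ and $h^{1,1}=20$; for an abelian variety of dimension $g$, $h^{2,0}=h^{0,2}=\binom{g}{2}$ and $h^{1,1}=g^2$. Now the failure of ordinarity in degree $2$ is, by the Bloch-Kato/Illusie-Raynaud criterion and Mazur's theorem (Section~\ref{mazur}), exactly the statement that the Newton polygon of $P_{2,v}(t)$ lies strictly above the Hodge polygon, i.e.\ there is no slope-$0$ part of length $h^{0,2}$. For a $K3$ surface, non-ordinary reduction at $v$ means the slope-$0$ part of $H^2_{\rm cris}$ has length $0$ rather than $1$, so every eigenvalue of $\phi_v^{c_v}$ on $H^2_{\rm cris}\otimes K_v$ has $w$-valuation $\geq 1$ (where $w(p)=1$ and we normalize so the $c_v$-th power Frobenius has eigenvalues of total valuation matching); hence the trace, being a sum of algebraic integers each of positive valuation, lies in the maximal ideal, so $p\mid a_v$. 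For the abelian variety, if the $p$-rank of $X_v$ is $\leq 1$ then $H^1_{\rm cris}$ has slope-$0$ part of dimension $\leq 1$, and since $H^2_{\rm cris}=\wedge^2 H^1_{\rm cris}$, the slope-$0$ part of $H^2_{\rm cris}$ has dimension $\leq \binom{1}{2}=0$; again every eigenvalue on $H^2$ has positive valuation and $p\mid a_v$.

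**The cleanest route**, which I would actually write up, is the one that avoids discussing individual eigenvalues and uses Mazur's divisibility statement (Section~\ref{divisibility}) directly: $\phi_v$ is divisible by $p$ on $F^1H^2_{dR}(X/W(k_v))$, so $\phi_v^{c_v}$ is divisible by $p^{c_v}$ on $F^1$. Thus the trace of $\phi_v^{c_v}$ on all of $H^2_{\rm cris}\otimes K_v$ is congruent mod $p^{c_v}$ to the trace of the induced map on the quotient $H^2_{\rm cris}/F^1 \cong H^2(X_v,\O_{X_v})$, which has rank $h^{0,2}$. For a $K3$ surface this quotient is a line, and the induced Frobenius on $H^2(X_v,\O_{X_v})$ is the Hasse-Witt--type operator; non-ordinary reduction says precisely that this operator is \emph{not} a unit on the $1$-dimensional space, i.e.\ its trace is in $pW(k_v)$, whence $a_v \equiv 0 \pmod p$. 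For an abelian variety with $p$-rank $\leq 1$, the analogous statement is that the Frobenius on the $\binom{g}{2}$-dimensional space $H^2(X_v,\O_{X_v}) = \wedge^2 H^1(X_v,\O_{X_v})$ has image of dimension $\leq \binom{1}{2}=0$ over the residue field, so its trace is again in $pW(k_v)$; combined with the divisibility on $F^1$, $p\mid a_v$.

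**The main obstacle** is the bookkeeping that turns "non-ordinary" or "$p$-rank $\leq 1$" into the precise statement that the relevant residual Frobenius operator on $H^2(X_v,\O_{X_v})$ fails to be bijective, and the matching of normalizations between the crystalline Frobenius $\phi_v$, its $c_v$-th iterate $\phi_v^{c_v}$, and the valuation $w$ used in Mazur's polygon statement; one must be careful that Mazur's divisibility is for $\phi_v$ (the $\sigma$-linear map) on the \emph{integral} de Rham cohomology, and that passing to $\phi_v^{c_v}$ and taking traces interacts correctly with the torsion-freeness guaranteed by Proposition~\ref{fintor}. Once these normalizations are pinned down, the conclusion $p\mid a_v$ is immediate from the two inputs (Katz-Messing to identify $a_v$ as a crystalline trace, and Mazur's divisibility to push that trace into $pW$).
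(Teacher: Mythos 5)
Your first sketch is essentially the paper's own argument: the paper proves the lemma by showing that under either hypothesis every eigenvalue of $\phi_v^{c_v}$ on $H^2_{\rm cris}\otimes\Q_p$ has positive valuation (for an abelian variety because the eigenvalues of $H^2=\wedge^2H^1$ are products of pairs of distinct integral eigenvalues of $H^1$, at most one of which is a $p$-adic unit; for a $K3$ because the shape of the Newton polygon forces the slope-zero part to vanish once the surface is non-ordinary), so that $w(a_v)>0$, and $a_v$ being a rational integer forces $p\mid a_v$. The route you say you would actually write up --- Mazur's divisibility on $F^1H^2_{dR}$ plus the induced operator on $H^2(X_v,\O_{X_v})$ --- is a genuinely different and workable variant that trades the Newton-polygon bookkeeping for the Hasse--Witt operator, but two of its assertions need repair. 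First, $\phi_v$ does not preserve $F^1$, so you cannot iterate the divisibility to conclude that $\phi_v^{c_v}$ is divisible by $p^{c_v}$ on $F^1$; what is true (and all you need) is $\phi_v^{c_v}(F^1)\subseteq \phi_v^{c_v-1}(pH)\subseteq pH$, which gives the trace congruence only modulo $p$. Second, for an abelian variety of $p$-rank at most $1$ it is the \emph{stable} rank (the rank of a high iterate) of the semilinear Frobenius on $H^1(X_v,\O_{X_v})$ that is at most $1$, not the rank of its image, so the correct statement is that the induced operator on $\wedge^2H^1(X_v,\O_{X_v})$ is nilpotent; its linearization then has trace zero in the residue field, which still yields $p\mid a_v$. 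With those two normalizations fixed, your second route is a clean alternative to the eigenvalue count; the eigenvalue count has the advantage of needing only the polygon comparison and the integrality of $a_v$, with no discussion of the mod $p$ Hodge filtration at all.
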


\begin{proof} We argue by contradiction. 
   Let $w$ be a valuation as in \ref{mazur} above. If $X$ is an
abelian variety defined over the finite field $k_\wp$, then the
$p$-rank of $X$ is precisely the number of eigenvalues of the linearized
  crystalline Frobenius (as in Section 3.4) acting on
$H^1(X_\wp/W(k_\wp))\otimes \Q_p$ which are $p$-adic units.  Suppose now
$\alpha$ is an eigenvalue of the crystalline Frobenius
$\phi_\wp^{c_\wp}$ acting on $H^2(X_\wp/W(k_\wp))\otimes \Q_p$. In case (2),
the hypothesis implies that $w(\alpha)$ is positive, and hence
$w(a_\wp)$ is strictly positive. As $a_\wp$ is a rational integer, the
lemma follows.

   When $X$ is a $K3$-surface, it follows from the shapes of the
Newton and Hodge polygons, that ordinarity is equivalent to the fact
that precisely one eigenvalue of $\phi_\wp^{a_\wp}$ acting on
$H^2(X_\wp/W(k_\wp))\otimes \Q_p$ is a $p$-adic unit. Hence, if $X$ is not
ordinary, then for any $\alpha$ as above, we have $w(\alpha_\wp)$ is
positive. Again since $a_\wp$ is a rational integer, the lemma follows.
\end{proof}

Now we complete the proof of \Cref{th:positive-density}.

\begin{proof}[Proof of \Cref{th:positive-density}]
Our proof follows closely the method of \cite{ogus82}.  Let $$d={\rm dim}~H^2_\ell.$$
   Fix a prime $\ell>d$, and let $\rho_\ell$ denote the
corresponding galois representation on $H^2_\ell$. We can choose a finite extension $L/\Q$ such that 
\benumlab
\item $L\supset K$, 
\item $L/\Q$ is Galois, 
\item $L$ contains (all) the $\ell^{th}$-roots of unity, 
\item the absolute Galois group, $G_L$, of $L$  leaves a lattice $V_\ell\subset H^2_\ell$ stable, and the ${\rm mod}$-$\ell$  representation,  denoted by  $\overline{\rho}_\ell$, of $G_L$, on $V_\ell\otimes \Z/\ell\Z$ is trivial i.e. for all $\sigma \in G_L$, $\overline{\rho}_\ell(\sigma)=1$.
\eenum
The property that $\overline{\rho}_\ell(\sigma)=1$ means that the trace of Frobenius at any prime $\wp$ of $L$ satisfies
We have
$$a_\wp\equiv d ({\rm mod}~\ell),$$
where $d={\rm dim}~H^2_\ell$.

The set of primes $\wp$ of $L$ which are of degree one over primes of $\Q$ is well-known to be of density one in $L$. 
Let $\wp$ be a prime of $L$ of degree $1$ over $\Q$, lying over
the rational prime $p$. By this choice, $p$ splits completely in $L$, and as $L$
contains the $\ell^{th}$ roots of unity, we have $p\equiv 1 ({\rm mod}~\ell)$.  Since $|a_\wp|\leq dp$ and is a rational
integer divisible by $p$ from the above lemma, it follows on taking
congruences modulo $\ell$ that
$$ a_\wp=\pm dp.$$

Now $a_\wp$ is the sum of $d$ algebraic integers each of which is
of absolute value $p$ with respect to any embedding. It follows
that all these eigenvalues must be equal, and equals $\pm p$.
Hence, we have that
   $$F_\wp=\pm pI$$
as an operator on $H^2_\ell$. By the semi-simplicity of the crystalline
Frobenius for abelian varieties and $K3$-surfaces (see \Cref{crys-frob}), it
follows that $\phi_\wp=\pm pI$ (for a positive density of primes $\wp$ of $L$). But this contradicts the divisibility
property of the crystalline Frobenius (\ref{divisibility}), that the
crystalline Frobenius is divisible by $p^2$ on $F^2H^2_{\rm cris}(X_\wp/W(k_\wp)) \simeq F^2H^2_{dR}({\mathcal X}/
\O_{K,S})\otimes W(k_\wp)$.

Hence, at least one eigenvalue of Frobenius on $H^2_\ell$ is a $p$-adic unit. So if $X$ is a K3 surface then $\wp$ has to be a prime of ordinary reduction, and if $X$ is an abelian variety then $X$ must have $p$-rank at least two. Since $\wp$ is a prime of degree one in $L$, one knows that such primes are of density one in $L$ and hence this proves all the assertions.
\end{proof}

\subsection{Ordinary reductions for varieties of K3 type}
This method can  be axiomatized to give positive
density results whenever certain cohomological conditions are
satisfied. We present this formulation for the sake of completeness.

\begin{proposition}[Joshi-Rajan]
Let $X$ be a smooth projective variety over a number field
$K$. Suppose $M$ is a polarized motive with \'etale realization $H_{\acute{e}t}(M,\Q_\ell)$ given by $H^i_{\acute{e}t}(X,\Q_\ell)$ for some $i>0$ (and any prime $\ell$). Suppose $M$ is of K3 type. This means that the following conditions are satisfied:
\benumlab
\item the Hodge structure $H_{dR}(M)$ is of K3 type, in particular, the Hodge number  $h^{0,2}(M)=1$,
\item The action of the crystalline Frobenius on the crystalline realization $H_{cris}(M_\wp)$
of $M$ at the prime $\wp$  is semi-simple for all
but finite number of primes $\wp$ of $K$.
\eenum
 Then the representation $H_{\acute{e}t}(M,\Q_\ell)$ is
ordinary at a set of primes of positive density in $K$ and the
$F$-crystal $H_{cris}(M_\wp)$ (given by the crystalline realization of $M$) is ordinary for these primes. In
other words, the motive $M$ has ordinary reduction for a positive
density of primes of $K$.
\end{proposition}
\brem 
This may be applied to varieties of K3 type where {\bf(1,2)} are available. For example, this is the case for cubic fourfold $X\subset \P^5$  over a number field $K$. In this situation \cite{rapoport72} establishes {\bf(1)} and semi-simplicity of the action of Frobenius on $H_{\acute{e}t}(M,\Q_\ell)$ and using this, we can establish {\bf(2)}  along the lines of \cite{deligne1981} (also see \cite{levin2001}). In particular, any smooth,projective  cubic fourfold over a number field has good ordinary reduction at a positive density of primes of $K$. 
\erem

\section{Primes of Hodge-Witt reduction}\label{se:hodge-witt-reduction}
\subsection{Hodge-Witt reduction}

\begin{definition}
Let $X/K$ be a smooth, projective variety over a number field. Let $\mathfrak{X}\to\Spec(\O_K)$ be a model \Cref{ss:models}. We say that a $X$ has good Hodge-Witt reduction at $\wp$ if the fiber  $X_\wp$ of $\mathfrak{X}$ at $\wp$ is a Hodge-Witt variety \Cref{def:hw}.
\end{definition}

Since ordinary varieties are Hodge-Witt, we can formulate a weaker
version of Conjecture~\ref{serre}.

\begin{conj}[Joshi-Rajan]\label{con:positive-density-hodge-witt}
Let $X/K$ be a smooth projective variety over a number field
then $X$ has Hodge-Witt reduction modulo a set of primes of $K$ of
positive density.
\end{conj}

\br
Let us understand what our conjecture means in the context of a K3 surface. Let $X/K$ be a smooth, projective K3 surface over a number field $K$.  So Conjecture~\ref{con:positive-density-hodge-witt} predicts in this case that there are infinitely many primes of Hodge-Witt reduction for $X$. Now $X$ has Hodge-Witt reduction at a prime $\wp$ of $K$ if and only if $X$ is of finite height (i.e. the formal group $\Phi^2$ attached to $X$ by \cite[\S 3]{artin77} is of finite height).
\er

As we remarked in the Introduction, \Cref{con:positive-density-hodge-witt} and Conjecture~\ref{serre} are not equivalent (see Example~\ref{elliptic-curve-example}). The two however are related and the following clarifies the relationship between Conjecture~\ref{serre} and \Cref{con:positive-density-hodge-witt}. Our theorem is the following:
\bthm\label{th:equivalence}
Let $X$ be a smooth, projective variety over a number field $K$. Then the following are equivalent:
\begin{enumerate}
\item There are infinitely many primes of ordinary reduction for $X$.
\item There are infinitely many primes of ordinary reduction for $X\times_K X$
\item There are infinitely many primes of Hodge-Witt reduction for $X\times_K X$.
\end{enumerate}
\ethm
\begin{proof}
Let us equip ourselves with models for $X$ and $X\times_K X$ over a Zariski-open subscheme of $U\subset\spec(\O_K)$ with smooth, proper, fibers over $U$. Let $\wp\in U$ be a prime of good, ordinary reduction for $X$. As a product of ordinary varieties is ordinary (see \cite{illusie90a}), so $X\times_K X$ is also ordinary at $\wp$. Thus (1) $\implies$ (2). Since any prime $\wp$ of good, ordinary reduction for $X\times_K X$ is also a prime of good, Hodge-Witt reduction for $X\times_KX$, so we have (2) $\implies$ (3). Let us prove that (3) $\implies$ (1). Assume $\wp$ is a prime of Hodge-Witt reduction for $X\times_K X$. By \cite[III, Prop 2.1(ii) and Prop 7.2(ii)]{ekedahl85} we see that a product is Hodge-Witt if and only if at least one factor is ordinary and other is Hodge-Witt. Thus, if $X\times_K X$ is Hodge-Witt at $\wp$ then $X$ must be ordinary at $\wp$. Thus, $X$ is ordinary at $\wp$.
\end{proof}

\subsection{Hodge-Witt Abelian varieties} Let $X$ be a self-dual $p$-divisible group over a perfect field $k$ of characteristic $p>0$ (we make the self-duality assumption because we work here in the context of abelian varieties). Let $X^t$ denote the Cartier dual of $X$ and let $X_{(0,1)}$ be the local-local part of $X$. Following \cite[Def 3.4.2]{conrad-book} we say that a $p$-divisible group $X$ is of \textit{extended Lubin-Tate type} if $\dim X_{(0,1)}\leq 1$ or $\dim X^t_{(0,1)}\leq 1$. Equivalently $X$ is of extended Lubin-Tate type if $X$ isogenous to an ordinary $p$-divisible group  or $X$ is isogenous to a product of an  ordinary $p$-divisible group and one copy of the $p$-divisible group of a supersingular elliptic curve. We begin with the following proposition:
\bpro\label{thm:cm-hodge-witt-3}
Let $k$ be a perfect field of characteristic $p>0$. Let $A$ be an abelian variety over $k$. Then the following are equivalent:
\benum
\item $A$ is Hodge-Witt.
\item $A$ has $p$-rank $\geq \dim(A)-1$.
\item The slopes of Frobenius of $A$ are in $\{0,1,\frac{1}{2}\}$ and multiplicity of $\frac{1}{2}$ is at most two.
\item The $p$-divisible group $A[p^\infty]$ is of extended Lubin-Tate type.
\eenum
\epro
\bp
The equivalence (1)$\iff$(2) is \cite[Corollary 6.3.16]{illusie83a}. The equivalence (2)$\iff$(3)$\iff$(4) is immediate from the definition.
\ep

\br
In \cite[Paragraph after 3.4.2 Definition]{conrad-book} an abelian variety $A$ with $p$-rank equal to $\dim(A)-1$ is called an \textit{almost ordinary abelian variety}. Thus, we see that $A$ is Hodge-Witt if and only if $A$ is ordinary or $A$ is almost ordinary.
\er

\br
As should already be clear from the previous remark, properties of Hodge-Witt varieties and non-Hodge-Witt varieties can be rather distinct. But subtler examples of the difference between the two exist and are implicit in \cite{conrad-book}. Since the phrase \textit{Hodge-Witt} is never mentioned in \cite{conrad-book}, in \Cref{thm:trans-conrad-book}, we provide the following transliteration of the relevant results.
\er

\bthm[Reformulation of {\cite[Theorem 3.5.1, Corollary 3.5.6 and Remark 3.5.7]{conrad-book}}]\label{thm:trans-conrad-book}
Let $k$ be a perfect field of characteristic $p>0$, let $A$ be an abelian variety with a CM algebra $K$ with $\dim(K)=2\dim(A)$ and an embedding $K\into \End^0_{\bar k}(A)=\End^0_k(A)$. 
\benumlab
\item If $A$ is Hodge-Witt, then there exists an isogeny $A'\to A$ such that $A'$ admits a CM-lifting to characteristic zero. \item On the other hand, if $A$ is not Hodge-Witt, there exists an isogeny $A'\to A$, such that $A'$ does not admit any CM-lifting to characteristic zero.
\eenum
\ethm

\subsection{Primes of Hodge-Witt reduction for abelian threefolds}
Our next theorem shows that
\Cref{con:positive-density-hodge-witt} is true for abelian
varieties of dimension at most three. This result is implicit in \cite{joshi00a-arxiv-version}.

\begin{theorem}[Joshi-Rajan]\label{th:positive-density-hodge-witt-abelian-threefold}
Let $A/K$ be an abelian variety of dimension at most three over a
number field $K$. Then there exists a finite extension $L/K$ and a
set of primes of density one in $L$ at which $A$ has Hodge-Witt
reduction.
\end{theorem}

\begin{proof}
The case $\dim(A)=1$ is immediate as smooth, projective curves are
always Hodge-Witt by Proposition~\ref{pr:curves-are-hodge-witt} and hence one may take $L=K$. Next assume that $\dim(A)=2$.  By \cite[2.9 Corollary]{ogus82}, there exists finite extension $L/K$, and a density one set of primes of $L$, all which are of degree one in $L$,  such that $A$ has ordinary reduction at any prime (of $L$) in this set. So we are
done in this case.

Thus, we need to address $\dim(A)=3$. This is clear by Theorem~\ref{th:positive-density}(2) and the characterization of Hodge-Witt abelian varieties provided by Proposition~\ref{thm:cm-hodge-witt-3}.  This proves  the result.
\end{proof}

\subsection{Hodge-Witt reductions of some surfaces and all smooth Fano threefolds}
For surfaces the geometric genus appears to detect the size of the
set of primes which is predicted in
\Cref{con:positive-density-hodge-witt}.

\begin{theorem}[Joshi-Rajan]\label{pgtheorem}
   Let $X$ be a smooth, projective surface with $p_g(X)=0$,
defined over a number field $K$.  Then \Cref{con:positive-density-hodge-witt} is true for $X$ i.e. for all but finitely many
primes $\wp$ of $K$, $X$ has Hodge-Witt reduction at $\wp$.
\end{theorem}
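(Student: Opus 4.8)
The plan is to reduce the Hodge-Witt property to a statement about the de Rham--Witt cohomology in a single bidegree and then exploit the hypothesis $p_g(X)=0$ together with Proposition~\ref{fintor}. First I would recall Ekedahl's and Illusie--Raynaud's criterion: a smooth projective surface $X_\wp$ over a perfect field is Hodge-Witt if and only if the slope spectral sequence degenerates at $E_1$, which for a surface amounts to the finite generation of $H^i(X_\wp,W\Omega^j_{X_\wp})$ over $W$ in all bidegrees. The only bidegrees where infinite torsion can occur for a surface are governed by the differential $d_1\colon H^2(X_\wp,W\O_{X_\wp})\to H^2(X_\wp,W\Omega^1_{X_\wp})$ and, dually (via the autoduality of the slope spectral sequence of a surface, see \cite{illusie79b}), by $H^0(X_\wp,W\Omega^1_{X_\wp})\to H^0(X_\wp,W\Omega^2_{X_\wp})$; more precisely, the obstruction to $X_\wp$ being Hodge-Witt is the nonvanishing of the ``dominoes'' attached to these differentials, and for a surface the relevant piece is the domino of $d_1^{0,2}$ in the slope spectral sequence, whose existence forces $H^2(X_\wp,W\O_{X_\wp})$ to be non-finitely-generated.

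Second, I would translate the vanishing of the obstruction into a condition on coherent cohomology modulo $p$. By the standard description of $H^2(X_\wp,W\O_{X_\wp})$ (it is the part of crystalline $H^2$ of slope $<1$, and its ``V-torsion-free quotient'' injects), the surface $X_\wp$ is Hodge-Witt as soon as $H^2(X_\wp,\O_{X_\wp})=0$, because then $H^2(X_\wp,W\O_{X_\wp})=0$ and there is no room for the problematic domino; symmetrically $H^0(X_\wp,\Omega^2_{X_\wp})=0$ kills the dual differential. So it suffices to find a single model $\mathcal X\to\Spec(\O_K)$ and to show that $H^2(X_\wp,\O_{X_\wp})=0$ for all but finitely many $\wp$. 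Here is where $p_g(X)=0$ enters: by definition $H^2(X,\O_X)=0$ over $K$, hence $H^0(X,\Omega^2_X)=0$ by Hodge symmetry in characteristic zero, and the final assertion of the proof of Proposition~\ref{fintor} lets me choose the model so that the Hodge cohomology sheaves $R^jf_*\Omega^i_{\mathcal X/\O_S}$ are locally free over $\O_S$ with locally free subquotients; semicontinuity and the torsion-freeness then give $H^2(X_\wp,\O_{X_\wp})=H^2(X,\O_X)\otimes k_\wp=0$ for all $\wp\notin S$.

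Third, I would assemble the pieces: after inverting the finite set $S$ of bad primes from Proposition~\ref{fintor} (so that crystalline cohomology is torsion-free and the Hodge sheaves behave well), for every $\wp\notin S$ we have $H^2(X_\wp,\O_{X_\wp})=0$, hence $H^2(X_\wp,W\O_{X_\wp})=0$, hence the slope spectral sequence of $X_\wp$ degenerates at $E_1$ with all terms finitely generated over $W$, i.e. $X_\wp$ is Hodge-Witt. The main obstacle I anticipate is the cohomological bookkeeping in the second step: making precise exactly which de Rham--Witt groups of a surface can fail to be finitely generated and confirming that $H^2(X_\wp,\O_{X_\wp})=0$ is genuinely sufficient (rather than merely necessary) to rule all of them out. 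This is where I would lean on the structural results of Illusie--Raynaud \cite{illusie83b} and Ekedahl \cite{ekedahl85} on dominoes and the degeneration of the slope spectral sequence for surfaces, which reduce the problem precisely to the bidegrees $(2,0)$ and $(0,2)$, both controlled by the geometric genus.
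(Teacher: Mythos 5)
Your proposal is correct and follows essentially the same route as the paper: reduce the Hodge--Witt property of a surface to the vanishing of $H^2(X_\wp,W(\O_{X_\wp}))$ (via the structure of the slope spectral sequence from \cite{nygaard79b}, \cite{illusie79b}, \cite{illusie83b}), deduce that vanishing from $H^2(X_\wp,\O_{X_\wp})=0$, and obtain the latter at all but finitely many primes from $p_g(X)=0$ by semicontinuity. Your extra discussion of dominoes and the bidegrees $(2,0)$, $(0,2)$ is a more explicit justification of the first reduction, which the paper handles by citation.
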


\begin{proof}
To prove that a smooth, proper surface (over a perfect  field of characteristic $p>0$) is Hodge-Witt, it is sufficient to prove that $H^2(X,W(\O_X))$ is of finite type. This is a consequence of the fact that under the hypothesis there is at most one non-zero differential in the slope spectral sequence of such a surface, namely $d:H^2(X,W(\O_X))\to H^2(X,W\Omega_X^1)$. Note that all $H^i(X,W\Omega^j_X)$  except for the cohomology groups appearing in this differential are of finite type. To see this  see \cite[Proof of Theorem 2.4]{nygaard79b} or \cite[Corollary 3.3, Theorem 3.7, Proposition 3.11, Proposition 3.12]{illusie79b} and this differential is zero if $H^2(X,W(\O_X))$ is of finite type. 	

In fact, we claim the following: let  $Y/k$ be smooth proper surface with geometric genus $p_g(Y)=0$ over a perfect field of characteristic $p>0$.  Then  $$H^2(Y,W(\O_Y))=0.$$ To prove this one uses the exact sequence (for all $n\geq1$, and $W_{n}(\O_Y)$ is the sheaf of Witt-vectors of length $n$)
$$\xymatrix{0\ar [r]& W_n(\O_Y)\ar [r]^V & W_{n+1}(\O_Y) \ar [r] &\O_Y\ar [r] &0,}$$
and one proves, by induction on $n$, that $H^2(Y,W_n(\O_Y))=0$ starting with the $n=1$ case $H^2(Y,\O_Y)
=0$ (i.e. $p_g(Y)=0$) and the fact that $\dim(Y)=2$ and hence on passage to the limit one sees that $H^2(Y,W(\O_Y))=0$. 

Now by choose a model  for $X/K$ and shrinking the base of required one can assume that all the fibers $X_\wp$ satisfy $p_g(X_\wp)=0$. Then by taking $Y=X_\wp$ in the above one obtains that $$H^2(X_{\wp},W(\O_{X_\wp}))=0$$   for all but a finite number of
primes $\wp$ of $K$. So $X_\wp$ is Hodge-Witt and hence for all but finite number of primes $\wp$ of $K$ as claimed.
\end{proof}

\begin{corollary}[Joshi-Rajan]
Let $X/K$ be an Enriques surface over a number field $K$. Then $X$ has
Hodge-Witt reduction modulo all but finite number of primes of $K$.
\end{corollary}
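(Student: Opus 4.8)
The plan is to deduce this directly from Theorem~\ref{pgtheorem}; the only point that really needs checking is that an Enriques surface satisfies the hypothesis $p_g = 0$. Recall that an Enriques surface $X$ over a field of characteristic zero is a smooth, projective surface whose canonical bundle $\omega_X$ is a nontrivial line bundle of order $2$ in $\mathrm{Pic}(X)$ (so $2K_X\sim 0$ but $K_X\not\sim 0$), with vanishing irregularity $h^1(X,\O_X)=0$. Since a nontrivial torsion line bundle admits no nonzero global sections, $H^0(X,\omega_X)=0$, and hence by Serre duality $p_g(X)=\dim_K H^0(X,\omega_X)=\dim_K H^2(X,\O_X)=0$. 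So the first step is simply to record this standard fact from the Enriques--Kodaira classification.

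Having done that, $X/K$ is a smooth, projective surface over the number field $K$ with $p_g(X)=0$, so Theorem~\ref{pgtheorem} applies verbatim and produces a finite set $S$ of primes of $K$ such that $X$ has good Hodge-Witt reduction at every prime $\wp\notin S$. That is the entire argument.

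The one subtlety worth flagging --- though it turns out not to be an obstacle --- is residue characteristic $2$, where the geometry of Enriques surfaces genuinely changes (classical, singular, and supersingular types, with $\omega_{X_\wp}$ possibly trivial). But Theorem~\ref{pgtheorem} already discards a finite set of primes, and one may enlarge that set to contain all primes of $K$ above $2$; the semicontinuity argument in the proof of Theorem~\ref{pgtheorem} needs only that the characteristic-zero fibre has $p_g=0$, which we have verified. Thus no further work is needed: the corollary is an immediate specialization of the preceding theorem, and there is no serious obstacle. (If one preferred, one could instead argue directly that for a surface being Hodge-Witt is equivalent to $H^2(X_\wp,W\O_{X_\wp})$ being of finite type over $W$, and that this group vanishes once $H^2(X_\wp,\O_{X_\wp})=0$ by the results of \cite{nygaard79b} and \cite{illusie83b} --- but this is precisely what is already packaged inside Theorem~\ref{pgtheorem}.)
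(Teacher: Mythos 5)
Your proof is correct and is essentially identical to the paper's, which simply cites the fact that $p_g(X)=0$ for an Enriques surface and invokes Theorem~\ref{pgtheorem}. The extra detail you supply (why the nontrivial $2$-torsion canonical bundle forces $H^0(X,\omega_X)=0$, and the remark about residue characteristic $2$) is a harmless elaboration of the same argument.
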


\begin{proof}
This is immediate from Theorem~\ref{pgtheorem} as for any Enriques surface $X$ over $K$, one has
$p_g(X)=0$.
\end{proof}

When $X$ is a smooth Fano surface over a number field, one can prove a
little more:

\begin{theorem}[Joshi-Rajan]\label{fanotheorem}
Let $X$ be a smooth, projective Fano surface, defined over a number
field $K$.  Then for all but finitely many primes $\wp$, $X$ has
ordinary reduction at $\wp$ and moreover the de Rham-Witt cohomology of
$X_\wp$ is torsion free.
\end{theorem}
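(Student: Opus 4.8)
The plan is to handle the two assertions — ordinarity of $X_p$, and torsion-freeness of its de Rham--Witt cohomology — in turn, after first choosing a good model. Fix a regular, proper, flat model $\mathcal{X}\to\Spec(\O_K)$ smooth over a non-empty open $U\subseteq\Spec(\O_K)$. Shrinking $U$ we may assume (ampleness being an open condition in proper families) that $-K_{\mathcal{X}/U}$ is ample on all fibres, and, by Proposition~\ref{fintor} and its proof, that for every $p\in U$ the crystalline cohomology $H^\bullet_{\rm cris}(X_p/W(k_p))$ is torsion free and $\dim_{k_p}H^i(X_p,\Omega^j_{X_p})$ equals the Hodge number $h^{j,i}$ of the characteristic-zero del Pezzo surface $X_K$, for all $i,j$. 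Since $X_K$ is del Pezzo these Hodge numbers are $h^{0,0}=h^{2,2}=1$, $h^{1,1}=b_2$, and $0$ otherwise. Hence each $X_p$ ($p\in U$) is a smooth projective Fano (del Pezzo) surface over the perfect field $k_p$ with $H^1(X_p,\O_{X_p})=H^2(X_p,\O_{X_p})=0$ and $H^0(X_p,\omega_{X_p})=H^1(X_p,\omega_{X_p})=0$.

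For ordinarity I would verify the defining condition $H^i(X_p,B^j_{X_p})=0$ for all $i\ge 0$ and, as $X_p$ is a surface, $j\in\{1,2\}$, directly from the fundamental exact sequences. For $j=1$, feed $0\to\O_{X_p}\to F_*\O_{X_p}\to B^1_{X_p}\to 0$ into cohomology: $H^1(\O_{X_p})=H^2(\O_{X_p})=0$, while $H^0(X_p,\O_{X_p})\to H^0(X_p,F_*\O_{X_p})$ is the Frobenius $a\mapsto a^p$ of the perfect field $k_p$, hence bijective; so $H^i(X_p,B^1_{X_p})=0$ for $i=0,1,2$. For $j=2$, every $2$-form on a surface is closed, so $Z^2_{X_p}=F_*\Omega^2_{X_p}$ and the Cartier sequence becomes $0\to B^2_{X_p}\to F_*\Omega^2_{X_p}\xrightarrow{C}\Omega^2_{X_p}\to 0$; since $H^0(\omega_{X_p})=H^1(\omega_{X_p})=0$ the long exact sequence forces $H^0(B^2_{X_p})=H^1(B^2_{X_p})=0$ and $H^2(B^2_{X_p})=\ker\big(C\colon H^2(X_p,F_*\Omega^2_{X_p})\to H^2(X_p,\Omega^2_{X_p})\big)$. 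The cokernel of this $C$ injects into $H^3(X_p,B^2_{X_p})=0$, so $C$ is a surjective endomorphism of the one-dimensional $k_p$-vector space $H^2(X_p,\omega_{X_p})\cong H^0(X_p,\O_{X_p})^{\vee}$, hence an isomorphism, whence $H^2(B^2_{X_p})=0$. Therefore $X_p$ is ordinary for every $p\in U$.

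For torsion-freeness of the groups $H^i(X_p,W\Omega^j_{X_p})$ I would pass to $\bar k_p$. Since $k_p$ is perfect, $W(\bar k_p)$ is faithfully flat over $W(k_p)$ and de Rham--Witt cohomology commutes with this base change, so it suffices to show that $H^i(X_{\bar p},W\Omega^j_{X_{\bar p}})$ is torsion free, where $X_{\bar p}=X_p\times_{k_p}\bar k_p$ is a del Pezzo surface over an algebraically closed field. By the classification of del Pezzo surfaces, valid in all characteristics, $X_{\bar p}$ is either $\P^1\times\P^1$ or is obtained from $\P^2$ by finitely many blow-ups at closed points. For $\P^2$ and $\P^1\times\P^1$ the groups $H^i(-,W\Omega^j)$ are free over $W(\bar k_p)$ — classically $H^i(\P^n,W\Omega^j)=W(\bar k_p)$ for $0\le i=j\le n$ and vanishes otherwise, and $\P^1\times\P^1$ follows by K\"unneth — and by the blow-up formula for de Rham--Witt cohomology (a projective-bundle/blow-up formula; see the work of M.~Gros, or \cite{ekedahl85}), blowing up a closed point on a surface changes $H^i(-,W\Omega^j)$ only by adding a free summand $W$ in bidegree $(i,j)=(1,1)$. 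Hence torsion-freeness propagates through the blow-ups, $H^i(X_{\bar p},W\Omega^j_{X_{\bar p}})$ is free over $W(\bar k_p)$, and consequently $H^i(X_p,W\Omega^j_{X_p})$ is torsion free for all $i,j$.

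The substantive point is this last one: ordinarity alone gives only that $H^i(X_p,W\Omega^j_{X_p})$ is finitely generated over $W$ (ordinary $\Rightarrow$ Hodge--Witt), not torsion free, so extra input is unavoidable. An alternative to the reduction to $\P^2$ is to use the degeneration at $E_1$ of the slope spectral sequence for ordinary varieties: for a del Pezzo surface $H^{\rm odd}_{\rm cris}(X_p/W)=0$ and $H^{2m}_{\rm cris}(X_p/W)$ is torsion free and pure of slope $m$ (its Hodge polygon in degree $2$ being the segment of slope $1$, forced by the Hodge numbers above), so degeneration identifies $H^i(X_p,W\Omega^j_{X_p})$ with the slope-$j$ graded piece of $H^{i+j}_{\rm cris}(X_p/W)$, which vanishes unless $i=j$ and is free otherwise. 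Either way, the one step lying outside the machinery recalled here is a blow-up formula for $W\Omega^\bullet$ (respectively, $E_1$-degeneration for ordinary varieties), and that is where the real work sits.
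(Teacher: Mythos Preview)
Your proof is correct, but the route differs from the paper's in instructive ways.

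For ordinarity, the paper does not compute $H^i(X_p,B^j_{X_p})$ directly. Instead it invokes the theorem of \cite{mehta97} and \cite{hara96} that a smooth projective Fano variety over a number field has Frobenius split reduction at all but finitely many primes, and then applies \cite[Proposition~3.1]{joshirajan03}, which says that a Frobenius split \emph{surface} is ordinary. Your argument, by contrast, is elementary and self-contained: once semicontinuity pins down $H^i(X_p,\O_{X_p})$ and $H^i(X_p,\omega_{X_p})$, the vanishing of $H^i(B^j)$ drops out of the two Cartier sequences. The paper's route is shorter to state but rests on substantially heavier input (the $F$-splitting theorem is not trivial); yours gives more, since it shows directly that any smooth projective surface over a perfect field with $h^{0,1}=h^{0,2}=0$ is ordinary, with no detour through $F$-splitting.

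For torsion-freeness, your ``alternative'' (b) is exactly what the paper does: it cites Lemma~9.5 of \cite{bloch86}, which for an ordinary variety with torsion-free crystalline cohomology gives the identification of $H^i(X_p,W\Omega^j_{X_p})$ with the slope-$j$ piece of $H^{i+j}_{\rm cris}(X_p/W)$, together with Proposition~\ref{fintor}. So you rediscovered the paper's argument there. Your primary option (a), via the classification of del Pezzo surfaces over $\bar k_p$ and the blow-up formula for de Rham--Witt cohomology from \cite{gros85}, is a genuinely different path; it has the pleasant feature of computing the groups outright (free of rank $1$ on the diagonal $i=j$, zero elsewhere) rather than merely bounding their torsion, but it is specific to del Pezzo surfaces, whereas the Bloch--Kato route works for any ordinary variety with torsion-free crystalline cohomology.
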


\begin{proof}
     By \cite[Exercise 1.6.E(5)]{shrawan-book} one knows 
that if $X$ is a smooth, projective and Fano variety $X$ over a
number field, then for all but finitely many primes $\wp$, the reduction
$X_{\wp}$ is Frobenius split.  By \cite[Proposition 3.1]{joshi03}
we see that the reduction modulo all but finitely many primes $\wp$ of $K$
gives an ordinary surface. Then by \cite[Lemma 9.5]{bloch1986} and
Proposition~\ref{fintor}, the result follows.
\end{proof}

\begin{example}[Joshi-Rajan] \label{fano} Let $K=\Q$ and
$X\subset \P^n$ be any Fermat hypersurface of degree $m$ and $n\geq
6$. If $m<n+1$ then this hypersurface is Fano, but by \cite[\S1, Theorem]{toki96}
this hypersurface does not have Hodge-Witt reduction at primes $p$
satisfying $p\not\equiv 1\mod m$. This gives examples of Fano
varieties which are Frobenius split but are not Hodge-Witt or ordinary.
\end{example}

\begin{remark}[Joshi-Rajan]
It is clear from Example~\ref{fano} that there exist Fano varieties
over number fields which have non-Hodge-Witt reduction modulo an
infinite set of primes and thus this indicates that in higher
dimension $p_g(X)$ is not a good invariant for measuring this
behavior. 
\end{remark}

\begin{question}[Joshi-Rajan]\label{hodge-level} The notion of Hodge level is introduced in \cite{deligne72} and complete intersections of Hodge level at most one are classified in \cite{rapoport72}. From this classification one sees that complete intersections of Hodge level one are all Fano varieties. Hence one can ask the following: let $X/K$ be a smooth, projective Fano variety over a number
field. Assume that $X$ has Hodge level $\leq 1$ in the sense of
\cite{deligne72,rapoport72}. Then does $X$ have Hodge-Witt reduction
modulo all but a finite number of primes of $K$?
\end{question}

\begin{remark}[Joshi-Rajan]
   A list of all the smooth complete intersection in $\P^n$ which
are of Hodge level $\leq 1$ is given in \cite[Table 1]{rapoport72}. By \cite[Th\'eor\`eme 2.2]{illusie90a}
a general, smooth complete intersection of dimension $n$ in $\P^{n+d}$ is ordinary and hence 
general, smooth complete intersections of Hodge level $\leq 1$ given in \cite[Table 1]{rapoport72} are Hodge-Witt. In the notation of \cite{rapoport72} one can verify, using \cite{suwa93}, that all smooth complete intersections Hodge level one are Hodge-Witt except possibly $V_n(2,2), V_n(2,2,2)$. It seems reasonable to expect that the result is true even for these cases at least for large characteristics.
\end{remark}

\begin{remark}
In  \cite{joshi07} we have  answered the Question~\ref{hodge-level}
affirmatively  for $\dim(X)=3$ where the Hodge
level condition is automatic.
\end{remark}

\bthm\label{th:fano-threefold}
Let $X/K$ be a smooth, projective and Fano variety of dimension three over a number field $K$. Then $X/K$ has Hodge level at most one and $X$ has Hodge-Witt reduction modulo all but finitely many primes of $K$.
\ethm
\bp 
This uses methods of \cite{joshi04b}. From \cite{rapoport72}, $X$ has Hodge level at most one if 
$$\sup\{|i-j|: H^{j}(X,\Omega_X^i)\neq 0\}\leq 1.$$
Since $X$ is Fano,  and dimension three, one has $H^j(X,\O_X)=0$ for $1\leq j\leq 3$ and the above inequality evidently holds.  By Hodge symmetry
$H^j(X,\O_X)=0=H^0(X,\Omega_X^j)$ for $1\leq j\leq 3$. Now choose a model $\mathcal{X}$ for $X$ over $\Spec(\O_K)$  (\Cref{ss:models}). Then there exists a non-empty open subset $U=\Spec(\O_K)-V(I)$ such that these vanishings  hold  for the special fiber $X_\wp$ for all $\wp\in U$. The vanishing of $H^j(X_\wp,O_{X_\wp})=0$ for $1\leq j\leq3$ gives, using the method of proof of \Cref{pgtheorem}, that $H^j(X_\wp,W(\O_{X_\wp}))=0$ for $1\leq j\leq 3$. This means that the de Rham-Witt differential $$H^j(X_\wp,W(\O_{X_\wp}))\to H^j(X_\wp,W\Omega_{X_\wp}^1)$$
is zero. From \cite{joshi04b}, one sees that the only differential which need be considered are
$$H^2(X_\wp,W\Omega^1_{X_\wp})\to H^2(X_\wp,W\Omega_{X_\wp}^2)$$
and
$$H^3(X_\wp,W\Omega^1_{X_\wp})\to H^3(X_\wp,W\Omega_{X_\wp}^2).$$
These are zero by the duality argument of \cite{joshi04b}. This proves that $X_\wp$ is Hodge-Witt.
\ep

\newcommand{\bQ}{\overline{\Q}}

\subsection{Primes of Hodge-Witt reduction for CM abelian varieties}
We will recall a few facts about CM abelian varieties. For a modern reference for CM abelian varieties reader may consult \cite{conrad-book}.
A \emph{CM field $K$} is a finite extension of $K/\Q$ which has no real embeddings and which contains a totally real subfield $K^+\subset K$ with $[K:K^+]=2$. 

Recall from
\cite[Definition 1.3.3.1]{conrad-book} that a \emph{CM algebra $P$} is a product $$P=K_1\times K_2\times\cdots\times K_s$$ of finitely many CM fields $K_1,\ldots,K_s$.

Let $L$ be a number field. For an abelian variety $A/L$, write $\End^0_L(A)=\End(A)\tensor_{\Z}\Q$. Following \cite[Definition 1.3.8.1]{conrad-book}, by an \emph{abelian variety $A/L$ with complex multiplication} (i.e. a \emph{CM abelian variety $A/L$}), we mean 
\benumlab
\item an abelian variety $A/L$, and
\item an embedding $j:P\into \End_L^0(A)=\End_L(A)\tensor_{\Z}\Q$, of some CM algebra $P$, such that
\item  $\dim_{\Q}(P)=2\dim(A)$. 
\eenum

From \cite[Proposition 1.3.2.1]{conrad-book} and \cite[Theorem 1.3.4]{conrad-book}, it is immediate that if $A/L$ is an abelian variety with CM, then any $L$-isotypic factor $A_j$ of $A$ also has CM and any $L$-simple factor of $A/L$ also has CM.

\bthm\label{th:CM-ab-general} 
Let $L$ be a number field and let $A/L$ be an abelian variety with CM. Then $A/L$ has infinitely many primes of ordinary and Hodge-Witt reduction.
\ethm
\bp 
This theorem will be proved by reducing to the case of a simple abelian variety with CM which will be  established in \Cref{thm:cm-hodge-witt}. This reduction is carried out as   follows. Suppose $A=A_1\times A_2\times \cdots \times A_s$ is a decomposition of $A$ into pairwise non-isomorphic $L$-isotypic factors. Choose an isotypic factor $A_j$ of $A$. Let $A_j=B^{m_j}$ for some simple abelian variety $B/L$ and an integer $m\geq 1$. Then by \Cref{th:equivalence},  $A_j$ has infinitely many primes of Hodge-Witt reduction if and only if $B$ has infinitely many primes of ordinary reduction. Repeating this with all isotypic factors of $A$, one reduces to the case where $A$ is a product $A=A_1\times A_2\times \cdots \times A_s$ of pairwise non-isomorphic simple CM factors. Then again from \cite[III, Prop 2.1(ii) and Prop 7.2(ii)]{ekedahl85} we see that $A$ has Hodge-Witt reduction at some prime $\wp$ if and only if all but one factor (say $A_j$) of $A$ have ordinary reduction at $\wp$ and the remaining factor $A_j$ has Hodge-Witt reduction at $\wp$. Thus, to complete the proof, it suffices to prove that any simple abelian $A/L$ with CM has infinitely many primes of ordinary or Hodge-Witt reduction. This is accomplished by the more precise \Cref{thm:cm-hodge-witt} which is proved below and subject to its proof, the proof of this theorem is complete.
\ep

\brem 
If $A$ is a CM abelian variety, but $A$ is not simple, then the combinatorics of primes of Hodge-Witt and ordinary reduction can be quite complicated and sometimes the sets of primes of ordinary and Hodge-Witt reduction can coincide  (see \Cref{ex:ab-fourfold} which was kindly provided by the Referee) and sometimes not (see \Cref{ex:ab-fourfold2}).
\erem

\subsection{Hodge-Witt reduction of simple abelian varieties with complex multiplication}
To complete the proof of \Cref{th:CM-ab-general}, we need to establish the case of simple abelian varieties with complex multiplication. This is carried out in this section. We will recall a few more facts about simple abelian varieties with CM. For a modern reference for simple CM abelian varieties reader may consult \cite{conrad-book}. 

Let $K$ be a CM field. Let $L$ be a number field and let $A$ be an abelian variety over $L$ satisfying
\benumlab
\item $A$ is absolutely simple, and
\item $\End^0_L(A)=K$, and
\item such that $[K:\Q]=2\dim(A)$.
\eenum

\emph{In this paper, by a simple abelian variety with CM, we will always mean an abelian variety of this type.} In particular, such an abelian variety is always simple.

By passing to a finite extension if required, we will assume (in addition to {\bf(1)--(3)}) that $L$ contains
 \benum[start=4,label={\bf(\arabic{*})}]\label{cm-assumptions}
 \item
 the galois closure $N$ of $K$
 \item and that $A$ has good reduction at all finite primes of $L$.
 \eenum
Assumption {\bf(5)} is guaranteed by the theory of Complex multiplication (see \cite[1.7.2.3 Theorem]{conrad-book}). From now on assume that the assumptions {\bf(1)--(5)} are hold.

Let $\Phi$ be the CM-type of $A$. Let $E(K,\Phi)$ be the reflex field. In this situation by \cite{yu04} we know that the Dieudonne module of the reduction $\bar{A}$ at any finite prime $\wp$ of $L$ is determined by $\Phi$ and the splitting of rational primes in $K$.

In particular, it is possible to completely determine the reduction type of $A$ from the decomposition of $p$  in $\O_K$. But in general this is a complicated combinatorial problem (see \cite{goren10,sugiyama14,zaytsev13}) depending on the galois closure of $K$ and also on the CM-type (readers will find \Cref{hodge-witt-cm-example} useful). In the present section we give sufficient conditions on prime decomposition of $p$ which allows us to conclude the infinitude of primes of Hodge-Witt and ordinary reduction for $A$. Before proceeding we make some elementary observations.

\begin{definition}\label{def:almost-completely}
Let $K/\Q$ be a finite extension. Let $p$ be a prime number which is unramified in $K$. We say that $p$ \textit{splits almost completely} in $K$ if we have a prime factorization:
$$
(p)=\prod_{i=1}^n\wp_i,
$$
with $n\geq [K:\Q]-1$ if $[K:\Q]>2$ and $n=[K:\Q]$ if $[K:\Q]\leq 2$. We say that $p$ \textit{splits completely} in $K$ if we have a prime factorization:
$$
(p)=\prod_{i=1}^n\wp_i,
$$
with $n=[K:\Q]$.
Let $f_i$ be the degree of the residue field extension of $\wp_i$ over $\Z/p$. Then the \textit{residue field degree sequence} of $p$ in $K$ is the tuple $(f_1,f_2,\ldots)$.
\end{definition}

It is clear from Definition~\ref{def:almost-completely} that if $p$ splits completely in $K$ then $p$ splits almost completely in $K$. Also from our definition $p$ splits almost completely in a quadratic extension if and only if $p$ splits completely. The following proposition is an elementary consequence of Definition~\ref{def:almost-completely} and standard facts about splitting of primes in $K$, but we give a proof for completeness.
\bpro\label{p:almost-completely-split}
Suppose $K$ is a CM field with $[K:\Q]=2g$. Let $K_0\subset K$ be the totally real subfield of $K$. Then we have the following assertions:
\benumlab
\item If $p$ splits completely in $K$ then  $p$ splits completely in $K_0$.
\item If $p$ splits almost completely, but not completely in $K$ then $p$ splits completely in $K_0$ and exactly one prime lying over $p$ in $K_0$ is inert in $K$ and the rest split in $K$.
\item  If $K/\Q$ is  galois then $p$ splits almost completely in $K$ if and only if $p$ splits completely in $K$.
\eenum
\epro

\bp
Let $[K:\Q]=2g$. Let 
\be 
(p)=\wp_1\cdots \wp_n
\ee
be the prime factorization of $p$ in $K$ (recall that by definition $p$ splits completely means $p$ is unramified) and let
\be 
(p)=\wq_1\cdots \wq_\ell,
\ee 
be the prime factorization of $p$ in $K_0$. Let $m$ be the number of primes in $\wq_1,\ldots,\wq_\ell$ which split in $K$. By renumbering the $\wp_1,\ldots,\wp_n$ we can assume that $\wp_1,\ldots,\wp_{2m}$ are the primes lying over the split primes $\wq_1,\ldots,\wq_m$. The rest lie over primes of $K_0$ (over $p$) which are inert in $K$. Thus we have
\be 
2g=2m+(\ell-m)
\ee
equivalently 
\be 
2g=\ell+m.
\ee
Further if $f(\wp|p)$ denotes the degree of the residue field of a prime $\wp$ lying over $p$ then we have for $K_0$ the equation
\be 
\ell \leq \sum_{i=1}^\ell f(\wq_i|p)=g.
\ee
Thus, we see that $2g=\ell+m\leq 2\ell\leq 2g$ as $m\leq \ell$. Hence, we have $2g\leq 2\ell\leq 2g$ so $\ell=g$. Thus, $p$ splits completely in $K_0$. This proves the first assertion.

Now suppose $p$ splits almost completely in $K$ but not completely in $K$. Then $[K:\Q]=2g>2$ by definition (see \ref{def:almost-completely}) as $p$ splits almost completely for $[K:\Q]=2$ if and only if $p$ splits in $K$. So if $p$ is a such a prime then $n=2g-1>1$ which gives $g>1$.  Further, using the  notation established in the previous case we have
\be 
2m+(\ell-m)=\ell+m=2g-1.
\ee
Thus, $\ell+m=2g-1$. We claim that $m\geq g-1$. Suppose $m<g-1$ then 
\be 
2g-1=\ell+m <\ell +g-1,
\ee  
so $g<\ell$. But as $\ell \leq \sum_{i=1}^\ell f(\wq_i|p)=g$ one gets $\ell\leq g$. So $g<\ell$ and $\ell \leq g$ which is a contradiction. This gives $m\geq g-1$. If $m=g$ then we are in the completely split case. So $m=g-1$. Then $2g-1=\ell+m=\ell+g-1$ and hence $\ell=g$. Further as $g>1$ so $m=g-1>1$ and hence exactly one prime of $\wq_1,\ldots,\wq_g$ is inert in $K$ as claimed. This proves the second assertion.

Finally suppose $K/\Q$ is galois and $p$ splits almost completely then $p$ is completely split as all the residue field degrees are equal, while  from the preceding discussion exactly one residue field degree  of prime lying over $p$ in $K$ is two if $p$ splits almost completely but not completely split, so we are done by the second assertion.
\ep 
The following is also immediate from this proof.
\bcor\label{cor:split-type}
Let $K$ be a CM field. Let $p$ be a rational prime which is unramified in $K$. Then the following are equivalent:
\benum
\item $p$ splits almost completely in $K$.
\item the residue field degree sequence of $p$ in $K$ is $(1,1,1,\cdots)$ or $(2,1,1,1\cdots)$.
\eenum
\ecor

We are now ready to prove our theorem. 
\bthm\label{thm:cm-hodge-witt}
Let $A,L,K$ be as above with $K$ a CM field (so that $A/L$ is a simple abelian variety with CM by $K$). Let $\wp_L$ be a prime of $L$ lying over an unramified prime $p$ of $\Q$.  If $p$ splits almost completely in $K$,  then $A$ has Hodge-Witt reduction at $\wp_L$.
\ethm

\bp
\newcommand{\bkappa}{\bar{\kappa}}
We follow a method due to \cite{goren10,sugiyama14,zaytsev13}--especially in \cite[Theorem 1.2]{sugiyama14} the result is proved when $p$ is completely split (in this case $A$ has ordinary reduction at $\wp_L$); but also see \cite{goren10,zaytsev13}. We note that \cite{goren10} spells out the details quite well (for abelian surfaces--and it is also enough to prove what we need here). Suppose $\wp_L$ is a prime lying over $p$ in $L$. Let $\kappa$ be the residue field of $\wp_L$. Then the reduction of $A$ at $\wp_L$ is a abelian scheme over $\kappa$. It will be convenient to extend the base field of the reduction from $\kappa$ to an algebraic closure $\bkappa$ of $\kappa$. Let $X$ be the $p$-divisible group of this abelian scheme over $\bkappa$.   Let $\O_K$ be the ring of integers of $K$, let $K_0\subset K$ be the totally really subfield and $\O_{K_0}$ be the ring of integers of $K_0$. Let $\sigma:\O_K\to\O_K$ be complex conjugation. Then $\O_K$ operates on the $p$-divisible group $X$.

Thus, we are in the following situation: we have a $p$-divisible group $X$ over $\bkappa$ with CM by $K$ and we have to show that  if $p$ splits almost completely in $K$ then $X$ is of extended Lubin-Tate type.

Let $M=M(X)$ be the Dieudonne module of $X$. Then $M$ is a $W=W(\bkappa)$-module and
let us note, by the standard theory of complex multiplication (see \cite[1.4.3.9 Proposition]{conrad-book}), that $M$ is an $\O_K$-module of rank one and hence, or at any rate, $M$ is an $\O_K\tensor\Z_p$-module. We may decompose $\O_K\tensor\Z_p$ using the factorization of $(p)$. This also gives us a decomposition of the Dieudonne module $M=M(X)$ of $X$.

Assume that $[K:\Q]=2g$ and $(p)=\prod_{i=1}^{n}\wp_i$. Then
	$$\O_K\tensor\Z_p=\prod_{i=1}^n W(k(\wp_i))$$
where $k(\wp_i)$ is the residue field of $\wp_i$.
Then we have to show that if $n\geq 2g-1$ then $X$ is of extended Lubin-Tate type.

If $n=2g$ then $p$ splits completely and we are done by \cite{sugiyama14}, but we recall his argument here as it is also needed in case $p$ splits almost completely. If $p$ is completely split, then by Proposition~\ref{p:almost-completely-split}(1) every $\wp_i$ pairs uniquely with a prime $\wp_j$ such that $\wp_j=\sigma(\wp_i)$ (i.e $\wp_i,\wp_j$ live over a prime of $\O_{K_0}$ which splits completely in $\O_K$). The factor of $M$, corresponding to $\wp_i,\wp_j$ provides a Dieudonne module of a $p$-divisible group, which descends to the residue field of the unique prime of $\O_{K_0}$ which splits into $\wp_i,\wp_j$ in $\O_{K}$, gives a $p$-divisible group of type $G_{1,0}\times G_{0,1}$ (in the standard notation of $p$-divisible groups) with slopes $0,1$. If $p$ is completely split, then there are exactly $g$ such factors and hence $X$ is ordinary $p$-divisible group and hence $A$ has ordinary reduction at $\wp_L$. If $p$ splits almost completely  but does not split completely  then by Proposition~\ref{p:almost-completely-split}(2) all but one of $\wp_i$ can be paired with exactly one $\wp_j$ as above and there are exactly $2g-2$ such primes, while the remaining prime $\wp_{2g-1}$ (after renumbering if required), lives over the unique prime of $\O_{K_0}$ lying over $p$ which is inert in $K$. As is shown in \cite{sugiyama14}, the factor of $M$ corresponding to $\wp_{2g-1}$ gives an indecomposable Dieudonne module of rank two over the residue field of $\wp_{2g-1}$ with slope $\frac{1}{2}$, and hence a $p$-divisible group $G_{1,1}$. Thus, in this case $X$ is extended Lubin-Tate group of height $2g$. This completes the proof of the theorem.
\ep

\bthm\label{thm:cm-hodge-witt-2}
Let $A,L,K$ be as in \Cref{thm:cm-hodge-witt}. Then there exists infinitely many primes of Hodge-Witt reduction. In particular, \Cref{con:positive-density-hodge-witt} is true for abelian varieties with complex multiplication and in general the density of such primes is greater than the density of primes of ordinary reduction.
\ethm

\bp
This is a consequence of Theorem~\ref{thm:cm-hodge-witt} and the Chebotarev density theorem. Let $N$ be the galois closure of $K$ in $\bar{\Q}$, let $G=\gal(N/\Q)$. Let $H\subset G$ be the subgroup fixing $H$. Then action of $G$ on the coset space $G/H$ embeds $G\into S_n$ where $n=[G:H]$  and In particular, $\frob_p\in G$ acts on $G/H$ (by permutation). This action depends only on the conjugacy class of $\frob_p$.  It is well-known, see for instance \cite{lenstra} and \cite{Lang1970} that the splitting type is determined by the cycle decomposition of $\frob_p$ on $G/H$ and that the splitting type of $p$ in $K$ is determined by the sequence of residue field degrees. In particular, we can ``read off'' all the information we need from the table of conjugacy classes of $G$. By Corollary~\ref{cor:split-type} we see that $p$ splits completely if and only if the cycle decomposition is identity, and the $p$ splits almost completely if the cycle decomposition is identity or exactly one transposition. Let $G_{tr}$ be the union of the conjugacy classes of these two types. Then $G_{tr}\supseteq \{1\}$ and hence $G_{tr}\neq \emptyset$. In particular, we see that the density of primes of Hodge-Witt reduction is $\frac{|G_{tr}|}{|G|}\geq \frac{1}{|G|}>0$ and this proves the theorem. To prove the last assertion it is enough to give an example. See Example~\ref{hodge-witt-cm-example} below.
\ep

Let us record some obvious corollaries of \Cref{th:CM-ab-general} and \Cref{thm:cm-hodge-witt}.
\bcor\label{cor:cm-hodge-witt-densities}
Let $A,L,K$ be as in \Cref{thm:cm-hodge-witt} with
$K$ a CM field with galois closure $N/\Q$ and $G=\gal(N/\Q)$. Let $G_{tr}$ be as above. Then the density of primes of Hodge-Witt reduction for $A$ is at least $\frac{|G_{tr}|}{|G|}$ and the density of primes of ordinary reduction is at least $\frac{1}{|G|}$.
\ecor

\bcor\label{cor:serre-CM}
Let $L$ be a number field and let $A/L$ be an abelian variety over $L$ with complex multiplication. Then  Conjecture~\ref{serre} (of Serre) is true for $A$ i.e. every  abelian variety with complex multiplication admits infinitely many primes of good, ordinary reduction.
\ecor
\bp
This is immediate from \Cref{th:CM-ab-general}.
\ep

\bcor\label{cor:answer-hodge-witt-torsion}
Let $L$ be a number field and let $A/L$ be an abelian variety over $L$ with complex multiplication. Then
\Cref{con:positive-density-hodge-witt} (of Joshi-Rajan) is true for CM abelian varieties i.e. every abelian variety with complex multiplication admits infinitely many primes of good, Hodge-Witt reduction. 
\ecor

\bp
This is immediate from \Cref{th:CM-ab-general} and \Cref{thm:cm-hodge-witt}.
\ep

\brem 
As the Referee remarked the existence of infinitely many primes of ordinary reduction for a CM abelian variety can as also be proved via the Shimura-Taniyama formula \cite[2.1.4.1]{conrad-book} (I thank the referee for suggesting that this remark be included here). Let $P=K_1\times K_2\times\cdots \times K_s$ be a CM algebra. Let $(A/L, P\into \End_L(A)\tensor_{\Z}\Q)$ be a CM abelian variety with a CM-type $(P,\Phi)$ (see \cite[Definition 1.5.2.1]{conrad-book}). Let $\wp|p$ be a prime of $L$ lying over a rational prime $p$. Fix an isomorphism $\iota:\C\simeq \bQ_p$.  Then one has a decomposition $\Phi=\coprod_{\wq|p} \Phi_\wq$ (disjoint union over the set of primes $\wq|p$), and where $\Phi_\wq=\{ \tau\in \Phi : \iota\circ\tau \textrm{ factors through }P_\wq\}$. Then Shimura-Taniyama asserts that the Newton polygon of $A_\wp$ is concatenation of straight line segments $\Psi_\wq$ of slope $\frac{|\Phi_\wq|}{[P_\wq:\Q_p]}$ and height $|\Phi_\wq|$. Thus, to prove that $A/L$ has infinitely many primes of ordinary reduction it is enough to show that there exists infinitely many primes $p$ which split completely in $P$. To see this, let $K=K_1\cdot K_2\cdots K_s$ be the compositum of the  fields which occur as factors of $P$. Then there exist infinitely many primes $p$ which split completely in $K$, and hence $p$ splits completely in each of the CM fields $K_j$. Hence by the Shimura-Taniyama formula, $A$ has ordinary reduction at primes of $L$ lying above $p$. As \Cref{ex:ab-fourfold} shows, the existence of primes of Hodge-Witt but non-ordinary reduction is quite subtle. It is precisely because of this that we have retained our approach via \Cref{p:almost-completely-split}, which also gives us a way of estimating densities in Hodge-Witt case and also tells when one can expect the two densities to be different.
\erem

\section{On the \mustata-Srinivas conjectures on multiplier and test ideals}\label{se:ms}
We begin with establishing the ordinarity of certain wonderful compactifications, the conjectures of \mustata-Srinivas \cite{mustata12a}, \cite{mustata12b} will be taken up in the next subsection.
\subsection{Ordinary reductions  of wonderful compactifications}
\newcommand{\sG}{\mathcal{G}}
We begin with the following general result \Cref {th:ord-wonderful-compact}, whose formulation is geared towards my discussion of the  \mustata-Srinivas conjecture in \Cref{ss:mustata-srinivas-conj}. \Cref {th:ord-wonderful-compact} leads directly, via the main characteristic $p>0$ results of  \cite{joshi11}, to the main result, \Cref{th:ord-config-spaces},  on ordinary reductions of a number of configuration spaces (the precise list of spaces is given in the statement of the theorem).
\bthm\label{th:ord-wonderful-compact}
Let $(Y,\sG)$ be a pair consisting of a geometrically irreducible, smooth, projective variety $Y/L$ over some number field $L$ and $\sG$ be a building set in the sense of \cite[Definition 2.1]{LiLi2009}, where each $G\in\sG$   is a geometrically irreducible, smooth subvariety of $Y$ is defined over $L$. For $G\in\sG$, let $I_G\subset \O_Y$ be the ideal sheaf of $G$ in $Y$ and let 
\be\label{eq:ideal-building-set} I_{\sG}=\prod_{G\in\sG}I_G\subset \O_Y\ee
be the  sheaf of ideals in $Y$ given using $\sG$.
Let $Y_{\sG}$ be the wonderful compactification (\cite[Definition 1.1]{LiLi2009}) of  $$Y^\circ=Y-\bigcup_{G\in\sG} G$$  constructed in \cite[Theorem 1.3]{LiLi2009} using $(Y,\sG)$, and let $\pi:Y_{\sG}\to Y$ be the morphism constructed by \cite[Theorem 1.3]{LiLi2009} (important point is that $\pi$ is a series of successive blowups with centered in $G\in\sG$). Assume that there exists models for $(Y,\sG)$ (in the sense of \Cref{ss:models}) and an infinite set $S$ of primes $\wp$ of $\O_L$ such that,  for all $\wp\in S$,  $Y$ and each $G\in\sG$ have good, ordinary reductions at $\wp$. Then the following assertions hold:
\benumlab
\item The canonical morphism $\pi:Y_{\sG}\to Y$ is a log resolution of $(Y,I_{\sG})$ and this property holds for the reduction modulo all but finitely many primes $\wp$ of $L$.
\item For all but  finitely  many  $\wp \in S$, the wonderful compactification $Y_{\sG}$, and each irreducible component of the exceptional divisor of $Y_{\sG}\to Y$, have good ordinary reduction at $\wp$.
\eenum
\ethm
\bp 
We begin by noting that $Y_{\sG}\to Y$ is obtained as a sequence of blowups with centers in $G\in\sG$. That $Y_{\sG}\to Y$  is a log-resolution of $(Y,I_{\sG})$ over $L$ is now immediate from \cite[Theorem 1.3]{LiLi2009}, the properties of $Y_{\sG}$ established therein, and the  definition of  $\log$-resolution of an ideal (see \cite[Definition 9.1.12]{lazarsfeld-positivity-vol2}) because the inverse image of the support, $\bigcup_{G\in\sG} G$, of $I_{\sG}$ is the exceptional divisor of $Y_{\sG}\to Y$ and by \cite[Theorem 1.2]{LiLi2009}, this exceptional divisor has simple normal crossing support.  By \Cref{ss:models}, one can find a model for $(Y,\sG)$ and $Y_{\sG}$, such that for all but finitely many prime ideals $\wp$ of $\Spec(\O_L)$, the reduction $Y_{\sG,\wp}$ of $Y_{\sG}$ at $\wp$ is obtained from the reduction mod $\wp$ datum $(Y_{\wp},\{G_{\wp}: G\in\sG\})$ and $\{G_{\wp}: G\in\sG\}$  is a building set in $Y_{\wp}$ in the sense of \cite[Definition 2.1]{LiLi2009} (we note that this building set requirement may also require the removal of a finite number of primes from $S$ as well). Then by my hypothesis that $Y$ and $\sG$ have good ordinary reduction at all but finitely many primes $\wp\in S$, and \cite[Theorem 3.2]{joshi11}, shows that $Y_{\sG}$ has good ordinary reduction at all $\wp\in S$ and $Y_{\sG,\wp}$ is obtained by a finite sequence of blowups centered in the ordinary building set $\{G_\wp:G\in \sG\}$. Notably, for all but finite number of primes $\wp$, the reduction modulo $\wp$ of the canonical morphism $Y_{\sG}\to Y$, provides a log-resolution of the reduction datum.  This proves {\bf(1,\,2)}. This completes the proof.
\ep
\newcommand{\sM}{\mathcal{M}}

\Cref{th:ord-wonderful-compact}, \cite[Theorem 3.2, Corollary 3.3]{joshi11}, the ordinarity results of the preceding subsections, especially \Cref{fanotheorem}, \Cref{th:positive-density}, and \Cref{th:CM-ab-general}, can be applied to establish \Cref{serre} (i.e. existence of ordinary reductions) for a number of configuration spaces which have appeared in literature. The precise assertion is as follows:
\bthm\label{th:ord-config-spaces}
Let $X/L$ be a smooth, proper variety over a number field $L$. If $X$ has good, ordinary reductions at infinitely many primes of $L$. Then  the following configuration spaces over $L$ arising from $X/L$ also have infinitely many primes of good ordinary reduction:
\benumlab
 		\item the scheme $X[n]$ of Fulton-MacPherson (see \cite{Fulton1994}) 
 		\item the scheme $X\langle n\rangle$ of Ulyanov (see \cite{Ulyanov2002})
 		\item the scheme $X^\Gamma$ of Kuiperberg-Thurston (see \cite{LiLi2009})
 		\item the generalized Fulton-Macpherson configuration scheme $X^{[n]}_D,X_D[n]$
 		(we assume $D$ is a smooth, ordinary subscheme of $X$) of
 		\cite{Kim2008},
 		\item the moduli, $\bar{\sM}_{0,n}$ (for $n\geq 3$), of
 		$n$-pointed stable curves of genus zero is ordinary.
 		\item the scheme of $T^{d,n}$ of stable, $n$-pointed, rooted trees of
 		$d$-dimensional projective spaces of \cite{Chen2006}.
 	\item 
 	In particular, the assertions {\bf(1)--(4)} hold whenever $X/L$ is one of the following:
 	\begin{enumerate}[label={\bf(\alph*)}]
 		\item  $X=\P^n$, or
 		\item $X= Gr(k,n)$ a Grassmanian, or
 		\item $X$ is a Fano surface, K3 surface or an abelian surface, or
 		\item $X=F_{n,m}\subset \P^{n+1}$ is a Fermat hypersurface of dimension $n\geq 1$, and degree $m\geq 2$.
 		\item $X$ is an abelian variety of any dimension with complex multiplication.
 	\end{enumerate}
 \eenum
\ethm
\bp 
The proof of {\bf(1)--(6)} is immediate from \Cref {th:ord-wonderful-compact} and \cite[Corollary 3.3]{joshi11}. The assertions {\bf(7)(a)--(7)(e)} follow if one verifies that in each case $X/L$ has good ordinary reduction at infinitely many primes of $L$. To see this, note that the assertions  {\bf(7)(a), (7)(b)} are standard: for $X=\P^n$ see \cite[Proposition 1.4]{illusie90a}, and for $X=Gr(k,n)$ one may obtain ordinarity of the reduction modulo $\wp$ by verifying that the Hodge and the Newton polygons of $Gr(k,n)$ coincide (\cite[Section 3]{illusie90a}). This is well-known and goes back to the computation of the Zeta function of Grassmannians \cite{weil49}. The assertion {\bf(7)(c)} follows from \Cref{fanotheorem}, \Cref{th:positive-density}(1) (K3 case) (or \cite{bogomolov09}) and \cite[Corollary 2.9]{ogus82} (abelian surface case). The assertion {\bf(7)(d)} follows from \Cref{tab:fermat-reductions} and \cite{toki96}. The assertion {\bf(7)(e)} follows from \Cref{th:CM-ab-general}.
\ep

\brem 
For $m> n+2$, $F_{n,m}$ is a variety of general type, so one expects that if $m\gg n+2$, then so are the configuration spaces arising from $F_{n.m}$  considered in \Cref{th:ord-wonderful-compact}{\bf(1)--(4)}. 
\erem

\subsection{Two remarks on the  \mustata-Srinivas Conjecture}\label{ss:mustata-srinivas-conj}
\newcommand{\sX}{\mathcal{X}}
\newcommand{\sY}{\mathcal{Y}}
\newcommand{\fa}{\mathfrak{a}}
\newcommand{\sI}{\mathcal{I}}
\newcommand{\sD}{\mathcal{D}}
\newcommand{\ssI}{\mathscr{I}}
In  this subsection, we discuss a conjecture considered  in\cite[Conjecture 1.2]{mustata12a} and its relationship to \Cref{con:positive-density-hodge-witt}. Notably, we apply \Cref{th:ord-wonderful-compact}, \Cref{th:ord-config-spaces} to establish new cases of \cite[Conjecture 1.2]{mustata12a}.

We begin by setting up the notations for \cite{mustata12a,mustata12b}.
Let $Y$ be a smooth variety over a number field $L$, $I\subset \O_Y$ be an ideal sheaf on $Y$ and suppose  $D\subset Y$ is a Weil divisor on $Y$. Suppose we are given a model $\sY\to\Spec(\O_L)$ for $Y$ and models $\sD\subset \sY$ (resp. $\sI\subset \O_{\sY}$) for $D$ (resp. $I\subset\O_Y$)   (models are as in \Cref{ss:models}--but we will not require properness of $Y$). For $\wp\in \Spec(\O_L)$ let $\sY_\wp$  be the fiber of $\sY$ over $\wp$ and let $\sI_\wp$ (resp. $\sD_\wp$) be the restriction of $\sI$ (resp. $\sD$) to the fiber $\sY_\wp$. Let $\lambda\in\R_{\geq0}$ be a non-negative real number. Let $\ssI(\sY,\sD,\sI^\lambda)$ be the multiplier ideal of $(\sY,\sD,\sI^\lambda)$ (see \cite[Section 3, ]{mustata12a}), and let $\tau(\sY_\wp,\sD_\wp,\sI_\wp^\lambda)$ be the test ideal of $(\sY_\wp,\sD_\wp,\sI_\wp^\lambda)$ (see \cite[Section 3, ]{mustata12a}).  I will not state \cite[Conjecture 1.1]{mustata12a} as it is implied by \Cref{serre}, on the other hand, the second conjecture of  \mustata-Srinivas \cite[Conjecture 1.2]{mustata12a} asserts the following:
\bcon\label{con:ms}
With the above assumptions and conventions on $(Y,D,I)$ and models $(\sY,\sD,\sI)$ (as in \Cref{ss:models}), 
the equality $$\ssI(Y,D,I^\lambda)_\wp:=\ssI(\sY,\sD,\sI^\lambda)_\wp=\tau(\sY_\wp,\sD_\wp,\sI_\wp^\lambda)=:\tau(Y_\wp,D_\wp,I_\wp^\lambda)$$
holds  for a  dense subset  of prime ideals $\wp\in\Spec(\O_L)$ and for all $\lambda\in\R_{\geq0}$.
\econ
The first remark is the relationship between \Cref{con:positive-density-hodge-witt}, \Cref{serre}, \Cref{con:ms} and is as follows. 

\bthm\label{th:jr-ms}
Consider the following assertions:
\benumlab
\item \Cref{con:positive-density-hodge-witt} of Joshi-Rajan holds for all smooth, projective varieties over any number field. 
\item \Cref{serre} of Serre holds for all smooth, projective varieties over any number field.
\item Conjecture 1.1 of  \mustata-Srinivas \cite{mustata12a} holds for all smooth, projective varieties over $\bQ$, and hence holds for all algebraically closed fields of characteristic zero.
\item \Cref{con:ms} i.e.  \cite[Conjecture 1.2]{mustata12a} holds for all smooth varieties over any algebraically closed field of characteristic zero.
\eenum
Then {\bf(1)}$\iff${\bf(2)}$\implies${\bf(3)}$\iff${\bf(4)}. In particular,  {\bf(1)}$\implies${\bf(4)} i.e. \Cref{con:positive-density-hodge-witt} of the existence of primes of Hodge-Witt reduction for all smooth, projective varieties over number fields implies the \Cref{con:ms} i.e. \cite[Conjecture 1.2]{mustata12a} on the coincidence of multiplier and test ideals at infinitely many reductions.
\ethm

\bp
The implication {\bf(1)}$\iff${\bf(2)} is immediate from \Cref{th:equivalence}. The equivalence {\bf(3)}$\iff${\bf(4)} is proved in \cite{mustata12a} and \cite{mustata12b} (the only point to note is that the numbering Conjecture 1.1 and Conjecture 1.2 is switched between \cite{mustata12a} and \cite{mustata12b}). The implication {\bf(2)}$\implies${\bf(3)} is immediate from the property \Cref{eq:ord} of ordinary varieties in characteristic $p>0$. This implies that Frobenius morphism $H^i(X,\O_X)\to H^i(X,\O_X)$ is an isomorphism for all $i\geq 0$, and the case $i=\dim(X)$ is adequate for the implication {\bf(2)}$\implies${\bf(3)}. We note that by \cite[Proposition 5.3]{mustata12a}, if {\bf(3)} holds over $\bQ$, then {\bf(3)} holds over all algebraically closed fields of characteristic zero. The equivalence {\bf(3)}$\iff${\bf(4)} is \cite[Theorem 1.3]{mustata12a} and \cite[Theorem 1.3]{mustata12b}. 
\ep

 My next results \Cref{cor:mustata-srinivas-cm}, and especially \Cref{cor:ord-config-spaces}, are motivated by the ordinarity results of the present paper,  the  computations of multiplier ideals for hyperplane arrangements  \cite{mustata2006}, \cite{teitler2007}--especially the latter which proceeds via wonderful compactifications of \cite{LiLi2009}, and by the ordinarity of  wonderful compactifications and configuration spaces  arising from ordinary complete varieties established in \cite{joshi11}. This allows us to prove \Cref{con:ms} for wonderful compactifications in \Cref{th:ord-config-spaces} and a large class of configuration spaces in \Cref{th:ord-config-spaces} and \Cref{cor:ord-config-spaces}.

\newcommand{\sC}{\mathcal{C}}
\bthm\label{cor:mustata-srinivas-cm}
Let all the assumptions and the notations ($L, (Y,\sG)$, $\pi:Y_{\sG}\to Y$, $I_{\sG}\subset \O_Y$), of \Cref{th:ord-wonderful-compact} remain in force. Notably, $\sG$ is a building set in $Y$ and $(Y,\sG)$ have good ordinary reduction modulo an infinite set of primes $\wp\in S$. Then there exists an infinite set $S$ of primes of $L$ such that \Cref{con:ms} of  \mustata-Srinivas holds for the pair $(Y,I_{\sG})$ i.e.  for all primes $\wp\in S$ and all $\lambda\geq0$ one has
$$\ssI(Y,I_{\sG}^\lambda)_\wp=\tau(Y_\wp,I^\lambda_{\sG,\wp}).$$ 
\ethm
\bp 
We equip ourselves with models (\Cref{ss:models}) for all the relevant data including $Y,Y_{\sG},\pi$, but for notational sanity we suppress models from the notation altogether.  We assert that the method of proof of \Cref{th:jr-ms}{\bf(3)}$\implies${\bf(4)} given in \cite{mustata12a}, together with \Cref{th:ord-wonderful-compact} implies the claim. 

Indeed, by definition of building sets \cite[Definition 2.1]{LiLi2009}, intersections of members of $\sG$ are in $\sG$, and hence by \Cref{th:ord-wonderful-compact}, \cite[Theorem 1.3]{LiLi2009}, and  \cite[3.2]{joshi11},  the intersections of arbitrary, pairwise distinct irreducible components of the exceptional locus of $Y_{\sG}\to Y$ are smooth and ordinary. 

This means that one has the conclusion of \cite[Lemma 5.6]{mustata12a} available to us. We note that in that Lemma, semi-simplicity of Frobenius is asserted (and this is used to establish bijectivity  \eqref{eq:ord} of relevant Frobenius morphism \cite[2.1]{mustata12a}). On the other hand,  as observed in \cite{joshi11}, the said bijectivity \eqref{eq:ord} is available to us  by our hypothesis that  $\sG$ is an ordinary building set (so intersections of any members of $\sG$ are again in $\sG$, and that members of $\sG$ have ordinary reduction). This also gives us the conclusions of \cite[Corollaries 5.7 and 5.8]{mustata12a} and hence one deduces the validity of \cite[Theorem 5.10]{mustata12a} in the present case.

Specifically, let $E$ be the exceptional divisor of $\pi$. Consider the reduction modulo $\wp$ of the relevant data for our chosen model. Thus, $E_\wp$ is the exceptional divisor of $\pi_\wp$. Let $F=F_{\wp}$ be the Frobenius morphism at $\wp$.  Then for all integers $e\geq 1$, and  for all but finitely many $\wp\in S$, a surjection of coherent sheaves on $Y_\wp$ (given by \cite[Theorem 5.10]{mustata12a}):
\be\label{eq:surj}
(\pi_{\wp})_*(F_*^e(\omega_{Y_{\sG,\wp}}(E_\wp))) \twoheadrightarrow (\pi_{\wp})_*(\omega_{Y_{\sG,\wp}}(E_\wp)).
\ee
\Cref{con:ms} asserts (for all $\lambda\geq 0$), the equality of the test ideal $\tau(Y_\wp,I_{\sG_\wp}^\lambda)$ at $\wp$, and  the reduction modulo $\wp$, $\ssI(Y,I_\sG^\lambda)_\wp$, of the multiplier ideal.   Note that inclusion $\tau(Y_\wp,I_{\sG_\wp}^\lambda)\subseteq \ssI(Y,I_\sG^\lambda)_\wp$ in one direction is given to us by invoking  \cite[Proposition 4.2]{mustata12a} (with $\pi_{\wp}$ as above and $D_\wp=0$). Thus,  the test ideal at $\wp$ is contained  in the multiplier ideal obtained by reducing modulo $\wp$. Hence, one has to prove the reverse inclusion (this is also how the proof of \cite[Theorem 1.3]{mustata12a} proceeds).

This remaining inclusion assertion follows from \cite[Lemma 6.1]{mustata12a}. This is seen as follows, the surjection \eqref{eq:surj} is local on $Y_\wp$. So we may cover $Y$ by some affine open cover and for opens  $U\subset Y$, replace $\pi:Y_{\sG}\to Y$, by $\pi^{-1}(U)\to U$ (for $U$ in the cover),  and choose a function $h_\wp\in\Gamma(U_\wp,\O_{U_\wp})$, such that $div(\pi_\wp^*(h_\wp))$ has support in the exceptional divisor of $\pi_\wp$ over $U_{\wp}$, and such that all the remaining hypothesis of \cite[Lemma 6.1]{mustata12a} are satisfied for all members of the open cover of $Y_\wp$  (the crucial hypothesis of that Lemma being the availability of the above surjection \eqref{eq:surj} on $U_\wp$ for $\pi_{\wp}^{-1}(U_\wp)\to U_{\wp}$ for  $U_\wp$ in the open cover of $Y_\wp$). This, allows us to invoke \cite[Lemma 6.1]{mustata12a} to assert that, for $U_\wp$ in the cover and hence for $Y_\wp$, the reduction of the multiplier ideal at $\wp$ is contained in the test ideal at $\wp$. Thus, \Cref{con:ms} holds for $(Y,I_{\sG})$. This completes the proof.
\ep

\bcor\label{cor:ord-config-spaces}
Suppose $X/L$ is as in \Cref{th:ord-config-spaces}{\bf(7)(a)--(7)(e)}. If $(\sC,I)$ is one of the configuration space given in \Cref{th:ord-config-spaces}{\bf(1)--(6)} arising from $X/L$,  and $I=I_{\sG}$ is the ideal sheaf defined by \eqref{eq:ideal-building-set}  for the building set $\sG$ used in \cite{LiLi2009}  for constructing $\sC$, then \Cref{con:ms} holds for $(\sC,I)$. 
\ecor
\bp 
This is immediate from \Cref{cor:mustata-srinivas-cm}, using the building sets used in \cite{LiLi2009} for constructing the configurations spaces \Cref{th:ord-config-spaces}{\bf(1)--(6)} arising from $X/L$.
\ep

\section{Primes of non Hodge-Witt reduction}\label{se:non-hw}
Let us discuss non Hodge-Witt reduction. A smooth,projective variety over an algebraically closed field $k$ of characteristic $p>0$ is a \emph{non Hodge-Witt variety} if, for some $i,j\geq$, the de Rham-Witt cohomology group $H^i(X,W\Omega_X^j)$ is not finitely generated as a $W$-module. In other words, for some $i,j\geq 0$, $H^i(X,W\Omega^j_X)$ has infinite torsion.	

Now suppose $X/K$ is a smooth, projective variety defined over a number field. Choose a model $\mathcal{X}\to\Spec(\O_K)$ (\Cref{ss:models}). I will say that $X$ has non Hodge-Witt reduction at a prime  $\wp$ of good reduction if $X_\wp$ is a non Hodge-Witt variety (viewed as a variety over the algebraic closure of the residue field of $\wp$). 

In this section we briefly describe what we know about primes of non Hodge-Witt reduction and what we expect to be true and how this relates to other well-known conjectures. A surprising and counter-intuitive observation is \Cref{th:frequency-of-non-hodge-witt-cm} which asserts that a Fermat hypersurface $F_{n,m}\subset \P^{n+1}$ of dimension $n$  and degree $m\geq 211$, at least 98\% of primes are of non-Hodge-Witt reduction. Readers will also find other examples in \Cref{se:examples} useful.

\subsection{Hodge-Witt torsion}
We include here some  observations
probably well-known to the experts, but we have not
found them in print.  We assume as in
the previous section that $X/K$ is smooth projective variety over a
number field and that we have fixed a proper flat model, smooth
over some open subscheme of the ring of integers of $K$ and whose
generic fiber is $X$.

Before we proceed, we record the following:

\begin{proposition}[Joshi-Rajan] Let $X/K$ be a smooth projective variety over a
number field $K$.  Then there exists an integer $N$ such that for all
primes $\wp$ in $K$ lying over any rational prime $p\geq N$, the
following dichotomy holds
\begin{enumerate}
\item either for all
$i,j\geq 0$, the Hodge-Witt groups are free $W$-modules (of finite
type), or
\item there is some pair $i,j$ such that
$H^i(X_\wp,W\Omega_{X_\wp})$ has infinite torsion.
\end{enumerate}
\end{proposition}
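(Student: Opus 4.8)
The plan is to use Proposition~\ref{fintor} to reduce the statement to a structural fact about a variety whose crystalline cohomology is torsion-free, and then to invoke the Illusie--Raynaud structure theory of the slope spectral sequence. First I would choose $N$ so that for every prime $\wp$ of $K$ over a rational prime $p\ge N$ the conclusion of Proposition~\ref{fintor} holds in every cohomological degree: $H^i_{\rm cris}(X_\wp/W(k_\wp))$ is torsion-free for all $i$, and (from the refinement at the end of its proof, together with semicontinuity and the degeneration of the Hodge--de Rham spectral sequence in characteristic zero) the Hodge numbers of $X_\wp$ agree with those of $X_{\bar K}$, the Hodge--de Rham spectral sequence of $X_\wp$ degenerates, and the Hodge filtration on de Rham cohomology is by direct summands; that is, $X_\wp$ satisfies the conditions defining a Mazur--Ogus variety. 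Fix such a $\wp$ and write $Y=X_\wp$, $W=W(k_\wp)$. Each $H^j(Y,W\Omega^i_Y)$ is $p$-adically separated and finitely generated over $W$ up to torsion, hence is finitely generated exactly when its torsion submodule has finite length, and carries infinite torsion as soon as it fails to be finitely generated; so alternatives (1) and (2) are exhaustive once one knows that, for a Mazur--Ogus $Y$, the vanishing of all \emph{infinite} torsion in the de Rham--Witt cohomology forces the vanishing of \emph{all} torsion there.

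This last assertion is where I would invoke Illusie--Raynaud \cite{illusie83b} (and Illusie \cite{illusie79b}). The slope spectral sequence $E_1^{i,j}=H^j(Y,W\Omega^i_Y)\Rightarrow H^{i+j}_{\rm cris}(Y/W)$ is a spectral sequence of modules over the Raynaud--Dieudonn\'e ring $R$; it degenerates at $E_1$ modulo torsion and at $E_2$ up to finite length. The torsion of $H^j(Y,W\Omega^i_Y)$ decomposes, compatibly with the $F$, $V$, $d$-structure, into a ``profinite'' part --- the dominoes attached to the $d_1$-differentials, which is either zero or infinite-dimensional over $k_\wp$ and whose non-vanishing is precisely the obstruction to $H^j(Y,W\Omega^i_Y)$ being of finite type (the Illusie--Raynaud criterion for Hodge--Witt-ness in terms of vanishing of domino numbers) --- and a finite-length part, fed by the ``exotic'' torsion carried by the $d_2$-differentials and by the torsion of the $E_1$-terms, which for a Mazur--Ogus $Y$ is governed by, and vanishes together with, the torsion of $H^*_{\rm cris}(Y/W)$. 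Granting the cited structural input that this finite-length part vanishes when $H^*_{\rm cris}$ is torsion-free in every degree, the dichotomy is immediate: if $Y$ is Hodge--Witt then all dominoes vanish, and with the finite part gone every $H^j(Y,W\Omega^i_Y)$ is free of finite type, giving (1); if $Y$ is not Hodge--Witt then some domino is nonzero, so the corresponding $H^j(Y,W\Omega^i_Y)$ is not finitely generated and therefore has infinite torsion, giving (2).

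The step that carries all the weight is exactly the one just quoted: that torsion-freeness of the crystalline cohomology kills the finite-length torsion of the individual de Rham--Witt groups. This is not visible at the level of plain $W$-modules --- a torsion-free $W$-module may admit a filtration with torsion graded quotients, and a finite exotic torsion class can in principle cancel against incoming and outgoing $d_2$-differentials without ever appearing in the abutment --- so the argument must use that, after accounting for the higher differentials, the filtration induced on $H^n_{\rm cris}(Y/W)$ is the Newton (slope) filtration, whose graded pieces are saturated sub-$F$-crystals and hence torsion-free; that these pieces occupy the disjoint slope ranges $[a,a+1)$ pinned down by Mazur's theorem and the divisibility \ref{divisibility}; that the Hodge filtration of a lift is by direct summands by the refinement of Proposition~\ref{fintor}; and that the torsion of $H^j(W\Omega^i_Y)$ for the de Rham--Witt complex is of a Dieudonn\'e type incompatible with the bidegrees of the surviving higher differentials. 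Carrying out this bookkeeping --- and in particular using torsion-freeness of $H^i_{\rm cris}$ in \emph{all} degrees, not merely the one of interest --- is the content of the Illusie--Raynaud results I would cite rather than reprove; the remaining work is to pin down the reference in loc.\ cit.\ in the precise form needed, and to check that Proposition~\ref{fintor} indeed makes $X_\wp$ Mazur--Ogus for all $p\ge N$, which is routine.
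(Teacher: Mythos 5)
Your proposal is correct and follows essentially the same route as the paper: use Proposition~\ref{fintor} to choose $N$ so that the crystalline cohomology of $X_\wp$ is torsion-free, observe that failure of finite generation automatically produces infinite torsion (case (2)), and in the Hodge--Witt case deduce freeness from torsion-freeness of the abutment via the structure theory of the slope spectral sequence. The only difference is bookkeeping: where you sketch the domino/exotic-torsion decomposition and Mazur--Ogus conditions, the paper simply cites the Bloch--Nygaard theorem that the slope spectral sequence of a Hodge--Witt variety degenerates at $E_1$, which together with torsion-free crystalline cohomology gives freeness directly.
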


\begin{proof}
   Choose a finite set of primes $S$ of $K$, such that $X$ has a
proper, flat model  over $\Spec(\O_{K,S})$, where $\O_{K,S}$
denotes the ring of $S$-integers in $K$. Choose $N$ large enough
so that for any prime $\wp$ lying over a rational prime $p>N $,
we have $\wp \not\in S $, and all  the crystalline cohomology
groups of $X_\wp$ are torsion-free. We note that this choice of
$N$ may depend on the choice of the model for $X$
over $\Spec(\O_{K,S})$. If $\wp$ is such that
$H^i(X_\wp,W\Omega^j_{X_\wp})$ are all finite type,  then by the
degeneration of the slope spectral sequence at the $E_1$-stage by
\cite[Theorem~3.7]{illusie79b}, and the
fact that the crystalline cohomology groups are torsion free, it
follows that the Hodge-Witt groups are free as well. If, on the
other hand,  some Hodge-Witt group of $X_\wp$ is not of finite
type over $W$, then we are in the second case.  \end{proof}

\begin{question}[Joshi-Rajan]\label{hodge-witt-torsion}
   Let $X/K$ be a smooth projective variety over a number
field.  When does there exist an infinite set of primes of
$K$ such that the Hodge-Witt cohomology groups of the reduction
$X_\wp$ at $\wp$ are not Hodge-Witt?
\end{question}

We would like to explicate the information encoded in such a set of primes (when it exists).

\begin{proposition}[Joshi-Rajan]\hfill
\benumlab
\item Let $X/K$ be an abelian surface over a number field $K$. Then there
exists infinitely many primes $\wp$ such that
$H^2(X_\wp,W(\O_{X_\wp}))$ has infinite torsion if and only if there
exists infinitely many primes $\wp$ of supersingular reduction for
$X$. 
\item In particular, let $X=E\times_KE$, with $E$ an elliptic curve over a number field  $K$ with  a real embedding. Then for an infinite set of primes of $K$, the
Hodge-Witt groups $H^i(X_\wp,W\Omega_{X_\wp}^j)$ are not torsion free for
$(i,j)\in\left\{(2,0),(2,1)\right\}$.
\eenum
\end{proposition}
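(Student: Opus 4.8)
The plan is to recognise the asserted equivalence as the combination of two facts: (i) for an abelian surface, being non-Hodge-Witt is the same as being supersingular; and (ii) for a surface $X$, $X$ is Hodge-Witt if and only if $H^2(X,W(\O_X))$ is a finitely generated $W$-module. For (i) I would first recall, from the proposition above (equivalently from \cite{illusie83a}), that an abelian variety over a perfect field is Hodge-Witt if and only if its $p$-rank is at least its dimension minus one; for an abelian surface $A_\wp$ this says that $A_\wp$ is non-Hodge-Witt precisely when its $p$-rank is $0$. Since the Newton polygon of $H^1_{\mathrm{cris}}$ of a two-dimensional abelian variety is symmetric of rank $4$ with slopes in $[0,1]$, $p$-rank $0$ forces all four slopes to equal $\tfrac12$; hence an abelian surface is non-Hodge-Witt if and only if it is supersingular.

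Granting (ii), the equivalence in the statement follows. If $H^2(A_\wp,W(\O_{A_\wp}))$ has infinite torsion it is not a finitely generated $W$-module, so $A_\wp$ is not Hodge-Witt, hence supersingular. Conversely, if $A_\wp$ is supersingular then $H^1_{\mathrm{cris}}(A_\wp/W)$ is pure of slope $\tfrac12$, so $H^2_{\mathrm{cris}}(A_\wp/W)=\bigwedge^2 H^1_{\mathrm{cris}}(A_\wp/W)$ is pure of slope $1$; therefore $H^2(A_\wp,W(\O_{A_\wp}))\otimes K$, which is the part of $H^2_{\mathrm{cris}}(A_\wp/W)$ of slopes in $[0,1)$, vanishes, so $H^2(A_\wp,W(\O_{A_\wp}))$ is a torsion $W$-module; and since $A_\wp$ is non-Hodge-Witt it is not finitely generated, i.e.\ it has infinite torsion. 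The justification of (ii) is the same package of results from \cite{illusie79b,illusie83b,nygaard79b} already invoked in the proof of Theorem~\ref{pgtheorem}: for a surface $X$, all Hodge-Witt groups other than $H^2(X,W(\O_X))$, $H^2(X,W\Omega^1_X)$ and their Ekedahl duals $H^0(X,W\Omega^2_X)$, $H^0(X,W\Omega^1_X)$ are automatically finitely generated, so $X$ is Hodge-Witt as soon as $H^2(X,W(\O_X))$ is.

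For the concrete assertion, take $X=E\times_\Q E$ with $E/\Q$ an elliptic curve. By a theorem of Elkies \cite{elkies87}, $E$ has supersingular reduction at infinitely many primes $p$; at such a $p$ the surface $X_p=E_p\times E_p$ is a supersingular abelian surface, hence non-Hodge-Witt by (i), and the previous paragraph gives infinite torsion in $H^2(X_p,W(\O_{X_p}))$, which is the case $(i,j)=(2,0)$. For $(i,j)=(2,1)$ I would invoke the domino calculus of Illusie-Raynaud and Ekedahl \cite{illusie83b,ekedahl85}: for a surface the only domino that can occur in the slope spectral sequence is the one attached to the differential $d_1\colon H^2(X,W(\O_X))\to H^2(X,W\Omega^1_X)$, all other candidates being excluded because a nonzero domino forces non-finite generation of both its source and its target, while outside the bidegrees $(2,0)$, $(2,1)$, $(0,2)$ and $(0,1)$ every Hodge-Witt group of a surface is finitely generated. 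Since $X_p$ is non-Hodge-Witt this domino is nonzero, whence both $H^2(X_p,W(\O_{X_p}))$ and $H^2(X_p,W\Omega^1_{X_p})$ have infinite torsion; in particular the latter is not torsion free.

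The main obstacle is exactly the structural input (ii) together with the domino step in the last paragraph: locating the infinite torsion in the precise bidegrees $(2,0)$ and $(2,1)$ is not something one checks by hand but rests on the Illusie-Raynaud-Ekedahl analysis of the slope spectral sequence of a surface and on the finiteness of every other Hodge-Witt group of a surface. One should also record why the relevant domino is genuinely nonzero for a supersingular abelian surface: this is the discrepancy between the Hodge number $h^{2,0}=1$ and the Hodge-Witt number $h^{2,0}_W=0$ (the latter from the purity of slope computed above), so nonvanishing of the domino is automatic and needs no extra work.
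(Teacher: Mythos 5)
Your argument is correct and is essentially the paper's proof with the black boxes opened: the paper simply cites Illusie's results on abelian surfaces (\cite[Section~7.1(a)]{illusie79b}) for the equivalence ``infinite torsion in $H^2(W\mathcal{O})$ $\iff$ supersingular'' and then invokes Elkies's theorem, while you reconstruct exactly what those cited results say (the $p$-rank $\geq\dim-1$ criterion, the slope computation showing $H^2(W\mathcal{O})\otimes K=0$ in the supersingular case, and the domino linkage forcing infinite torsion in both bidegrees $(2,0)$ and $(2,1)$). No substantive difference in route.
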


\begin{proof}
The first part  follows from  the results of
\cite[Section~7.1(a)]{illusie79b}.  The second part follows from
combining the first part  with
Elkies' theorem (see \cite{elkies87,elkies89}),  that given an elliptic curve
$E$ over a number field $K$ with a real embedding,  there are infinitely
many primes $\wp$ of $K$ such that $E$ has supersingular reduction modulo $\wp$.
\end{proof}

\subsection{Existence of primes of non Hodge-Witt reduction}
\br
Question~\ref{hodge-witt-torsion} which was raised by us in \cite{joshi00a-arxiv-version} can now be formulated as a more precise conjecture (in the light of Corollary~\ref{cor:answer-hodge-witt-torsion}). Our formulation is the following.
\er

\bcon\label{con:non-hodge-witt}
Let $X/K$ be any smooth, projective variety of dimension $n$ over a number field $K$, then
\benumlab
\item  there exists infinitely many primes $\wp$ of $K$ such that $X$ has non-Hodge-Witt reduction at $\wp$.
\item and if  $H^n(X,\O_X)\neq 0$, then there exists infinitely many primes $\wp$ of $K$ such that the domino associated to the differential $$H^n(X_\wp,W(\O_{X_\wp}))\to H^n(X_\wp,W\Omega^1_{X_\wp})$$ is non-zero (here $X_\wp$ is the ``reduction'' of $X$ at $\wp$). In particular, $X$ has non-Hodge-Witt reduction at $\wp$.
\eenum
\econ

\br\label{r:supersingular-k3}\ 
\benumlab
\item The difference between \Cref{con:non-hodge-witt}{\bf(1)} and \Cref{con:non-hodge-witt}{\bf(2)} is that the former asserts that the differential $H^j(X_\wp,W\Omega_{X_\wp}^i) \to H^j(X_\wp,W\Omega_{X_\wp}^{i+1})$ is non-zero for some $i,j\geq0$, while the latter specifies that the specific differential $H^n(X_\wp,W(\O_{X_\wp}))\to H^n(X_\wp,W\Omega^1_{X_\wp})$ is non-zero. In particular, 

\ \ \ \Cref{con:non-hodge-witt}{\bf(2)}$\implies$\Cref{con:non-hodge-witt}{\bf(1)}.

\item  The interest study of the top differential  in \Cref{con:non-hodge-witt}{\bf(2)}  is natural because (a) it contributes to the middle dimensional crystalline cohomology $H^n_{cris}(X/W)$ and (b) by Lefschetz type theorems, lower dimensional cohomology arises from hypersurface sections, and hence this differential can be viewed as highlighting new phenomena in dimension $n$  (with phenomena from lower dimensions as being considered ``understood.''). 
\item Moreover, by \cite{nygaard79b}, for $\dim(X)=2$, the only non-zero differential in the slope spectral sequence of a smooth, proper surface is the differential $H^2(X,W(\O_X))\to H^2(X,W\Omega^1_X)$.

\item  There is also an analog of the remark {\bf(3)} for some threefolds. By the criterion for degeneration of slope spectral sequences of threefolds given by \cite[Theorem 6.1]{joshi07} and the vanishing of $H^1(X,W(\O_X)=H^2(X,W(\O_X)))=0$ given by \cite{suwa93}, one sees that if $X$ is any smooth, projective, complete intersection threefold, then $X$ is Hodge-Witt if and only if the differential $H^3(X,W(\O_X))\to H^3(X,W\Omega_X^1)$ is zero (equivalently $H^3(X,W(\O_X)))$ is a finite type $W$-module. Thus for such $X$, this top differential controls the de Rham-Witt cohomology.

\item To understand the meaning of \Cref{con:non-hodge-witt}{\bf(2)}  in a concrete situation, let  $X$ be a K3 surface. Then  $H^2(X,\O_X)\neq 0$, hence the hypothesis of \Cref{con:non-hodge-witt}{\bf(2)} is valid. Now, by  \cite[II, 7.2(b)]{illusie79b}, $X$ has non Hodge-Witt reduction at a prime $\wp$ of $K$ if and only if $X_\wp$ is a supersingular K3 surface if and only if $X_\wp$ is of not of finite height. In this (supersingular) case, one has $H^2(X_\wp,W(\O_{X_\wp}))=k_\wp[[V]]$ where $k_\wp$ is the residue field of $\wp$ (see \cite[II, 7.2(b)]{illusie79b}). Thus \Cref{con:non-hodge-witt}{\bf(2)} predicts that there exist infinitely many primes of supersingular reduction for a K3 surface over a number field.
\item Now suppose $X$ is smooth, projective and Calabi-Yau threefold i.e. $\dim(X)=3$. Then \Cref{con:non-hodge-witt}{\bf(2)} and remark{\bf(4)} imply that $X/K$ has infinitely many reductions  $X_\wp$ which are not of finite height in the sense that the Artin-Mazur formal group of $X_\wp$ is not of finite height (see \cite{vandergeer2003}).
\eenum
\er

\subsection{Non Hodge-Witt reduction and Elkies' Theorem on supersingular primes}\label{r:elkies} 
\Cref{con:non-hodge-witt} includes, as a special case,  the infinitude of primes of supersingular reduction for any elliptic curve over any number field $K$ (\Cref{con:non-hodge-witt-implies-elkies-thm}{\bf(1)}) and the existence of infinitude of primes of common primes of supersingular reduction for any pair of elliptic curves over a number field \Cref{con:non-hodge-witt-implies-elkies-thm}{\bf(2)}.

\bpro\label{con:non-hodge-witt-implies-elkies-thm}
Assume \Cref{con:non-hodge-witt} is true for all abelian surfaces over  number fields. Then 
\benumlab 
\item every  elliptic curve over any number field  has infinitely many primes of supersingular reduction, and
\item every pair of elliptic curves $E_1,E_2$ over any number field $K$ have infinitely many common primes of supersingular reduction.
\eenum
\epro
\bp 

To see {\bf(1)} this let $E/K$ be an elliptic curve over any number field $K$.
Let $A=E\times_KE$. Then $A$ is an abelian surface and hence $H^2(A,\O_A)\neq 0$ and so \Cref{con:non-hodge-witt} predicts that there are infinitely many primes $\wp$ of $K$ such that $A$ has non-Hodge-Witt reduction at $\wp$. But since $\dim(A)=2$, $A$ has non-Hodge-Witt reduction if and only if the $p$-rank of $A$ is zero (\Cref{thm:cm-hodge-witt-3}). This means $E$ has $p$-rank zero at all such $\wp$. Thus, $E$ has supersingular reduction at infinitely many primes $\wp$ of $K$.

To prove \Cref{con:non-hodge-witt-implies-elkies-thm}{\bf(2)}, let $E_1,E_2$ be a pair of elliptic curves over a number field $K$, and let $A=E_1\times_K E_2$. \Cref{con:non-hodge-witt} predicts that $A$ has non-Hodge-Witt reduction modulo $\wp$ for infinitely many $\wp$. Now if $E_1$ or $E_2$ is ordinary modulo $\wp$ then $A$ has Hodge-Witt reduction modulo $\wp$. Thus, $A$ is not Hodge-Witt at $\wp$ implies both $E_1,E_2$ are not ordinary modulo $\wp$ i.e. both are supersingular modulo $\wp$.
\ep

\br \label{r:supersingular-two-elliptic} The set of such primes given by \Cref{con:non-hodge-witt-implies-elkies-thm}{\bf(2)} is, of course, expected to be very thin and in the next remark we discuss an explicit example.

Let us note that even for pairs of elliptic curves over $\Q$ with small conductors, the first prime of common supersingular reduction can be very large. Here is a particularly interesting example with ``modular flavor.'' Consider the modular curve $X_0(37)$, which has genus two and its Jacobian, $J_0(37)$, is isogenous to a product of two elliptic curves of conductor $N=37$. The curves were described by Mazur and Swinnerton-Dyer in their classic paper on Weil curves and the two elliptic curve factors of $J_0(37)$ are not isogenous to each other. The two curves appear as 37a and 37b1 in John Cremona's tables. So to find a prime of non-Hodge-Witt reduction for $J_0(37)$ is the same as finding a prime of common supersingular reduction for the elliptic curve factors of $J_0(37)$. The first such a prime is $p=18489743$ (a number several order of magnitudes larger than the conductor). This makes searching for such primes for pairs of elliptic curves rather tedious.
\er

\subsection{Conjecture~\ref{con:non-hodge-witt} for abelian varieties and Fermat varieties}
\numberwithin{table}{subsection}
For abelian varieties \Cref{con:non-hodge-witt} asserts the following:
\bpro\label{con:non-hodge-witt-implies-elkies-thm3}\label{pr:ab-var-non-hw}
Let $A$ be an abelian variety over a number field $K$ with $\dim(A)=g>1$. If \Cref{con:non-hodge-witt} is true for $A$, then there exist infinitely many primes  $\wp$ of $K$ such that reduction of $A$  at $\wp|p$ has $p$-rank at most $g-2$.
\epro
\bp 
This is immediate from \Cref{thm:cm-hodge-witt-3}.
\ep

Next let us consider Fermat hypersurfaces.  Let $F_{m,n}$ be the closed subscheme of $\P^{n+1}$ defined by the homogeneous equation
\be\label{eq:fermat-hyp}
	 F_{n,m}: X_0^m+\cdots +X_{n+1}^m=0.
\ee  
The notation in \eqref{eq:fermat-hyp} is such that $F_{n,m}$ is the \emph{Fermat hypersurface of dimension $n$ and degree $m$}.

In the direction of \Cref{con:non-hodge-witt} we have the following:
\bthm\label{th:fermat-non-hw1}
\label{thm:non-hodge-witt-CM}
Let $X$ be one of the following:
\benumlab
\item a Fermat hypersurface $F_{n,m}\subset \P^{n+1}$  over $\Q$ of dimension $n$ and degree $m$, then \Cref{con:positive-density-hodge-witt}, \Cref{serre} hold for all $(n,m)$ with $n,m\geq1$,
 and if
$$(n,m)=\begin{cases}
n=2, m\geq 4\\
n\geq3, m\geq 5,
\end{cases}
$$
then 
 \Cref{con:non-hodge-witt}{\bf(1)} holds for $F_{n,m}$.
\item If $X$ is 
\begin{enumerate}[label={\bf(\alph{*})}]
\item
   a Fermat hypersurface $F_{n,m}\subset \P^{n+1}$  over $\Q$ of dimension $n\in \{2,3\}$ and degree $m$, or
\item a simple abelian variety with complex multiplication over some number field $K$.
 \eenum
Then \Cref{con:non-hodge-witt}{\bf(2)} (and hence \Cref{con:non-hodge-witt}{\bf(1)}) holds for $X$.
\eenum
\ethm

\bp
\Cref{thm:non-hodge-witt-CM}{\bf(1)} was observed in (see \cite{joshi00a-arxiv-version}). Proof of this assertion uses the \Cref{tab:fermat-reductions} which summarizes the results of \cite{toki96} and the last column of that table tells us (under the stated hypothesis) that there are infinitely primes of non-Hodge-Witt reduction for $X$. Especially that one has, from \Cref{tab:fermat-reductions}, that for $(n,m)$ given  in the assertion \Cref{thm:non-hodge-witt-CM}{\bf(1)}, that the density $\dnhwa{n,m}=\dnhwa{F_{n,m}}$ of primes of non Hodge-Witt reduction is given by
$$\dnhwa{n,m}=\begin{cases}
1-\frac{1}{\phi(n)} & \text{ if }(n,m)\neq (2,7),  \\
\frac{1}{2} & \text{ if }(n,m)=(2,7).
\end{cases}
$$ 
This completes the proof of \Cref{thm:non-hodge-witt-CM}{\bf(1)}. 

The assertion of Theorem~\ref{thm:non-hodge-witt-CM}{\bf(2)(a)} follows from \cite[Theorem 6.1]{joshi07} and  \cite[Theorem 1]{suwa93} as $H^2(X_\wp,W(\O_{X_\wp}))=0$ (resp. $H^1(X_\wp,W(\O_{X_\wp}))=0$) for $n=3$ (resp. $n=2$). So if $F_{n,m}$ is non-Hodge-Witt for some $\wp$ and $n=2$ or $n=3$, then $H^n(X_\wp,W(\O_{X_\wp}))$ has infinite torsion as asserted.

The \Cref{thm:non-hodge-witt-CM}{\bf(2)(b)} is rather more delicate. After \cite[Theorem 1.1]{sugiyama14}, the existence primes of non Hodge-Witt reduction boils down to a Chebotarev argument similar to the one used in the proof of Theorem~\ref{thm:cm-hodge-witt-2}. To be precise one needs to show that there are infinitely many primes $p$ such that at least two of the  primes $\wp_1,\ldots,\wp_r$ lying over $p$ in $K_0$ remains inert in $K$ (which is clear by the method of proof of Theorem~\ref{thm:cm-hodge-witt-2}). So the existence of primes $\wp$ of non Hodge-Witt reduction is clear. So suppose that for some prime $\wp|p$, $X_\wp$ is non-Hodge-Wit. Let $g=\dim(X_{\wp})$ be the dimension of $X_\wp$.

Suppose, if possible, that \Cref{con:non-hodge-witt}{\bf(2)} fails to be true i.e. $H^g(X_\wp,W(\O_{X_\wp}))$ is of finite type over $W$. Then I claim that this leads to a contradiction. If $H^g(X_\wp,W(\O_{X_\wp}))$ is of finite type then, by \cite[Chapitre I, 2.18.2]{Illusie1983a}, the domino number $T^{0,g}=0$. 

Since $X_\wp$ is an abelian variety, its crystalline cohomology is torsion-free and Hodge-de Rham spectral sequence of $X_\wp$ degenerates at $E_1$ (\cite{illusie79b}) i.e. according to \cite[2.31]{joshi2020}, $X_\wp$ is Mazur-Ogus variety. This means  that the Hodge-Witt numbers $h_W^{i,j}$ \cite[2.24]{joshi2020} (defined by Torsten Ekedahl) coincides with Hodge numbers $h^{i,j}$ of $X_\wp$ (\cite[2.25.2]{joshi2020}). For a slope   $\lambda$ of Frobenius $F_\wp$ occurring in $H^g_{cris}(X_\wp/W(k_\wp))$, let $m_\lambda$ be its multiplicity.  Let $$m^{0,g}= \sum_{\lambda\in[-1,0[}(\lambda-0+1)m_\lambda+\sum_{\lambda\in[0,1[}(0+1-\lambda)m_\lambda$$ be the slope-number of $X_\wp$ given \cite[2,22]{joshi2020}. Then from the proof of \cite[Theorem 2.5]{joshi2020} that
$$0=T^{0,g}=h_W^{0,g}-m^{0,g}=h^{0,g}-m^{0,g}.$$
Since $X_\wp$ is an abelian variety of dimension $g$, $$\dim H^g(X_\wp,\O_{X_\wp})=h^{0,g}=1.$$
Hence, $T^{0,g}=0$ says that $m^{0,g}=1$. From the standard properties of crystalline cohomology of an abelian variety, $\lambda\geq0$. Hence the first term defining  $m^{0,g}$ is zero. Since $X_\wp$ has non Hodge-Witt reduction, I have shown \cite[Theorem 2.2]{joshi2016ord} that any slope of $H^g_{cris}(X_\wp/W(k_\wp))$ satisfies $\lambda\geq 1$. Hence the second term defining $m^{0,g}$ is also zero. This says $m^{0,g}=0$. This contradicts $m^{0,g}=1$. Thus $T^{0,g}\neq 0$ (and hence $T^{0,g}=1$) and hence $H^g(X_\wp, W(\O_{X_\wp}))$ is not a finite type $W$-module. Therefore it has infinite torsion. This proves \Cref{thm:non-hodge-witt-CM}{\bf(2)(b)} and hence the theorem.
\ep

\vskip1cm
\begin{table}[H]
\begin{center}
	\renewcommand{\arraystretch}{1.2}
	\captionof{table}{\small Densities of ordinary, Hodge-Witt, and non-Hodge-Witt reductions of Fermat hypersurfaces $F_{n,m}\subset \P^{n+1}/\Q$ of dimension $n$ and degree $m$}\label{tab:fermat-reductions}
	\begin{tabular}{|c|c|c|c|}
		\hline $(n,m)$ & ordinary $\dor{n}{m}$ & Hodge-Witt $\dhw{n}{m}$ & non Hodge-Witt $\dnhw{n}{m}$ \\
		\hline $(1,m)$ & $\frac{1}{\phi(m)}$ & $1$ & $0$ \\
		\hline $(n,1)$ & $1$ & $1$ & $0$ \\
		\hline $(n,2)$ & $1$ & $1$ & $0$ \\
		\hline $(2,3)$ & $\frac{1}{2}$ & $1$ & $0$ \\
		\hline $(3,3)$ & $\frac{1}{2}$, ($p\cong 1\bmod{3}$) & $1$ & $0$  \\
		\hline $(3,4)$ & $\frac{1}{2}$, ($p\cong 1\bmod{4}$) & $1$ & $0$ \\
		\hline $(5,3)$ &  $\frac{1}{2}$, ($p\cong 1\bmod{3}$) & $1$ & $0$  \\
		\hline $(2,7)$ & $\frac{1}{6}$, ($p\cong 1\bmod{7}$) & $\frac{1}{2}$, ($p\cong 1,2,4\bmod{7}$)  & $\frac{1}{2}$, ($p\cong 3,5,6\bmod{7}$) \\
		\hline $(n,m)$ & $\frac{1}{\phi(m)}$ & $\frac{1}{\phi(m)}$ & $1-\frac{1}{\phi(m)}$ \\
		\hline
	\end{tabular}
\end{center}
\end{table}
\br
As \Cref{th:frequency-of-non-hodge-witt-cm} shows that in fact primes of non Hodge-Witt reduction are surprisingly prevalent in the context of \Cref{th:fermat-non-hw1}{\bf(1)}.
\er
\section{Explicit examples}\label{se:examples}
In this subsection we describe some explicit examples related to \Cref{cor:serre-CM}, \Cref{con:positive-density-hodge-witt} and \Cref{con:non-hodge-witt} which the reader may find useful.
\subsection{Fermat Varieties}\label{ss:fermat}
Let us begin with Fermat hypersurface $F_{m,n}$ defined by  \eqref{eq:fermat-hyp}, considered as a variety over $\Q$, and explicitly compute densities of primes of various types of reduction. Let $\dor{n}{m},\dhw{m}{n}$, and $\dnhw{n}{m}$ denote the asymptotic density of primes of ordinary, Hodge-Witt and non-Hodge-Witt reduction for $F_{n,m}/\Q$ (we ignore primes of bad reduction). Then the densities may be calculated by using the results of \cite{toki96}. Except for a finite list of exceptional $F_{n,m}$, which are given in \Cref{tab:fermat-reductions}, the densities are given by the following (we write $\delta_{n,m}^?$ for $\delta^?(F_{n,m},\Q)$, with $?={\rm ord,hw,non-hw}$):
\begin{eqnarray}
  \delta_{n,m}^{\rm ord} & = & \frac{1}{\phi(m)}\\
  \delta_{n,m}^{\rm hw} & = & \frac{1}{\phi(m)}\\
  \delta_{n,m}^{\rm non-hw} & = & 1-\frac{1}{\phi(m)}
\end{eqnarray}
In the table we list the density of primes of different types of reduction for $F_{n,m}$, we also list the condition for a prime so that $F_{n,m}$ has the reduction of the given sort.
The table also shows that the density of ordinary and Hodge-Witt reductions can be different. In particular, for the septic Fermat surface $F_{2,7}\subset\P^3$ the densities are \be\dor{2}{7}=\frac{1}{6}<\dhw{2}{7}=\frac{1}{2}=\dnhw{2}{7}.\ee So from \Cref{tab:fermat-reductions}, we see that among all Fermat surfaces $F_{2,n}$ ($n\geq 4$), the septic Fermat surface shows rather interesting behavior. We do not know why this surface shows this exceptional behavior.

\subsection{The surprising prevalence of non Hodge-Witt (and non ordinary) reduction}\label{ss:prevalence}
\bthm\label{th:frequency-of-non-hodge-witt-cm} 
Suppose $n\geq 3$ and consider the Fermat hypersurface $F_{n,m}\subset \P^{n+1}$ of degree $m$. If $m\geq 211$, then $F_{n,m}$ has non-Hodge-Witt reduction at  more than $98\%$ of primes of $\Q$. Notably,  for all $n\geq 2$, one has
$$\lim_{m\to\infty} \delta^{\rm non-hw}(F_{n,m})=1.$$
\ethm
\bp
From \cite{toki96} whose main result is summarized in the Table~\ref{tab:fermat-reductions}, one obtains the density of non-Hodge-Witt primes in terms of the Euler's totient function $\phi(m)$. One can use elementary lower bounds for the Euler totient function. There are many ways to get lower bounds for $\phi(m)$. The easiest case is to get lower bounds  arising from \cite[Theorem 327]{hardy-wright} which asserts that for any positive $\delta$, $\phi(m)/m^{1-\delta}\to\infty$ as $m\to\infty$. There are many well-known bounds which follow from this.  For example, it is a pleasant exercise to check that $\phi(m)\geq \frac{\sqrt{m}}{\sqrt{2}}$  holds for $m\geq 3$. 

Hence, if $m\geq 5202$ then $\phi(m)\geq  51$. Then by examining values of $\phi(m)$ for $1\leq m\leq5202$ one sees in fact that if $m\geq 211$ then $\phi(m)\geq 51$. So one has  $$1-\frac{1}{\phi(m)}\geq 1-\frac{1}{51}\approx 0.9804,\text{ for all } m\geq 211.$$ Now for $m\geq 3$, $\phi(m)\leq m-1$ and hence $\frac{\sqrt{m}}{\sqrt{2}}\leq \phi(m)\leq m-1$ for $m\geq 3$.   Hence, from Table~\ref{tab:fermat-reductions}, for $m\geq 8$, one has
$$1-\frac{\sqrt{2}}{\sqrt{m}}\leq \delta^{\rm non-hw}(F_{n,m})=1-\frac{1}{\phi(m)}\leq 1-\frac{1}{m-1},$$ which  gives the asserted limit as $m\to\infty$.
\ep

\br So \Cref{th:frequency-of-non-hodge-witt-cm} shows that contrary to the popular understanding of primes of ordinary reduction, the de Rham-Witt cohomology of the Fermat hypersurface $F_{n,m}$ has  infinite torsion (in de Rham-Witt cohomology) with a far higher frequency than the frequency of primes of ordinary or Hodge-Witt reductions. In particular, the reductions are quite far removed from being ordinary or Hodge-Witt most of the time and  \emph{in the CM situation infinite torsion in de Rham-Witt cohomology  is the norm rather than a rare appearance.}

So one could say that if $X/K$ is a smooth projective variety such the Mumford-Tate group of $H^n_{dR}(X/K)$ has a factor which is a torus then one should expect that $X$ has a larger fraction (even of positive density) of primes of non-Hodge-Witt reduction and a randomly picked prime has a good chance of being a prime of non-Hodge-Witt reduction!
\er

\subsection{Surfaces: abelian or general type}\label{ss:abvar-gen-typ-surf}
Here are some examples of curves and surfaces.
\begin{example}\label{elliptic-curve-example}[Product of two elliptic curves]
Let $E/\Q: y^2=x^3-x$ and $E'/\Q: y^2=x^3+1$ be elliptic curves over $\Q$ with complex multiplication by $\Q(i)$ and $\Q(\zeta_3)$ respectively. Then $E$ has ordinary reduction if and only if $p\cong1\bmod{4}$ and $E'$ has ordinary reduction if and only if $p\cong1\bmod{3}$. Let $A=E\times E'$. This is an abelian surface, and we want to calculate the densities $\dora{A}$ and $\dhwa{A}$ or primes of good ordinary reduction and Hodge-Witt reduction for $A$. Now suppose $p$ is a prime of good reduction for $A$. Then we see from the proof of \Cref{th:equivalence} that $A$ has ordinary reduction at $p$ if and only if both $E$ and $E'$ have good ordinary reductions at $p$ (so $p\cong1\bmod{3}$ and $p\cong1\bmod{4}$). Thus, we see that $\dora{A}=\frac{1}{4}$. Again from the proof of \Cref{th:equivalence}, we see that $A$ has Hodge-Witt reduction if and only if one of $E,E'$ is ordinary and the other is Hodge-Witt. Now by \Cref{pr:curves-are-hodge-witt},  both $E,E'$ have Hodge-Witt reduction at any prime of good reduction. Thus $A$ has Hodge-Witt reduction at $p$ whenever $E$ has good ordinary reduction at $p$ or whenever $E'$ has good ordinary reduction at $p$. Thus, $A$ has Hodge-Witt reduction at $p$ if and only if $p\cong1\bmod{4}$ or $p\cong1\bmod{3}$. Hence, we see that $$\dhwa{A}=\frac{1}{2}+\frac{1}{2}-\frac{1}{4}=\frac{3}{4}.$$ Thus, we see that for Abelian surfaces it is possible to have $$\frac{1}{4}=\dora{A}<\dhwa{A}=\frac{3}{4}<1.$$
In particular, the set of primes of Hodge-Witt reduction includes the set of primes of ordinary reduction, but can be strictly larger than it. 
\end{example}

\begin{example}\label{elliptic-curve-example2}[Product of two elliptic curves II]
Let $E/\Q: y^2=x^3-x$ and $E'=E$ and $A=E\times_\Q E'=E\times_\Q E$. Here $E$ has CM by $\Q(i)$. Let $p$ be a prime of good reduction for $A$.	Then $A$ has ordinary reduction at $p$ if and only $A$ has Hodge-Witt reduction at $p$ if and only if $E$ has ordinary reduction at $p$ if and only if $p\cong 1\bmod{4}$. Thus, $\dora{A}=\frac{1}{2}=\dhwa{A}<1$.
\end{example}

\brem 
The reader should have no difficulty in generalizing the above two examples to arbitrary number of elliptic curve factors of various type.
\erem

\begin{example}\label{hodge-witt-cm-galois-example}[CM by a field galois (over $\Q$)]
Let $C:y^2=x^5+1$. Here the genus $g=2$  and the Jacobian of $C$ has  CM by $\Q(\zeta_5)$ which is cyclic and galois over $\Q$. Assume $p>5$. Then  by \cite[3.6.1]{goren10} its Jacobian is ordinary if and only if $p\cong{1}\bmod{5}$, and for all other residue classes modulo $5$, the reduction is non-Hodge-Witt. In particular, in this example, the set of primes of ordinary reduction coincides with the set of primes of Hodge-Witt reduction  and this common set of primes has density $\dora{A}=\frac{1}{\phi(5)}$ (where $\phi(5)=|\gal(\Q(\zeta_5)/\Q)|=4$), and the set of primes of non-Hodge-Witt reduction has density $\dnhwa{A}=1-\dora{A}=1-\dhwa{A}=1-\frac{1}{\phi(5)}=\frac{4}{5}$.
\end{example}

\begin{example}\label{hodge-witt-cm-example}[CM by a non galois CM field]
Consider the field $K=\Q[x]/(x^4+134x^2+89)$. This is a quartic non-galois CM field, its normal closure $N$ has $G=\gal(N/\Q)=D_4$ the dihedral group of order $8$. This field and abelian surfaces with CM by this field are discussed in \cite[3.5]{goren10}. Examining the conjugacy classes of $G$ we see that the conjugacy class of transpositions has two elements, and the conjugacy class of the identity is of course one element. Now there exists an abelian variety $A$ over some number field $L$, with CM by $K$.  The list of possible reductions at primes of $L$ is described in \cite[Table 3.5.1]{goren10} and is determined by the factorization of $p$ in $K$. In that table, the primes with ordinary reduction correspond to entries with $f=2$, while primes with Hodge-Witt reduction correspond to $f\geq 1$ ($f$ is the $p$-rank of the reduction modulo a prime lying over $p$). The table row numbers $(i), (v)$ are the only entries in \cite[Table 3.5.1]{goren10} with ordinary reduction. The row numbers (i), (ii), (iv), (v), (xii) and (xiv) of \cite[Table 3.5.1]{goren10} are all the possible entries with Hodge-Wit reduction. Note in the entries (xii), (xiv), $p$ ramifies in $K$ (and hence in $N$) and so this set of primes is finite and can be ignored. For $A$, by Theorem~\ref{thm:cm-hodge-witt-2} the set of primes of Hodge-Witt reduction has density  $\frac{3}{4}\geq\delta^{hw}\geq \frac{3}{8}$ while the density of primes of ordinary reduction is $\frac{1}{2}\geq\delta^{ord}\geq\frac{1}{8}$. We note that some primes  other than the almost split primes contribute to the densities,  and one can, using \cite{goren10}, describe them explicitly, but we do not work this out here--as this sort of calculation should be worked out in some generality and this will require some additional combinatorics which belongs to a separate paper by itself. For all other sufficiently large primes, the reduction is non-Hodge-Witt and there are infinitely many of these as well (from the above bounds).
\end{example}

\begin{example}[Surfaces of general type]\label{surf-gen-type}
Suppose $C,C'$ are smooth, projective curves over $\Q$. Assume that $C$ is curve of genus $g_C\geq2$, and $C'$ has genus $g_{C'}=2$. Assume that the Jacobian of $C$ is a simple abelian variety with Complex multiplication.  Assume that the Jacobian of $C$ (resp. $C'$) is not isogenous to a factor of the Jacobian of $C'$ (resp. $C$). Since the Jacobian of $C$ has CM,  $C$ has good ordinary reduction at a positive density of rational primes. By \cite[Corollary 2.9]{ogus82}, $C'$ has good ordinary reduction at a positive density of  primes. Let $\dora{C}, \dora{C'}$ be the density of primes of good ordinary reduction for $C,C'$ respectively. Thus  $\dora{C},\dora{C'}>0$. Now consider the smooth, projective surface $S=C\times_\Q C'$. By an argument similar to the one given in \Cref{elliptic-curve-example}, \Cref{elliptic-curve-example2} and \Cref{pr:curves-are-hodge-witt},  $S$ is a surface with good Hodge-Witt reduction whenever $C$ or $C'$ has ordinary reduction. Moreover, $S$ has good ordinary reduction at $p$ precisely when both $C,C'$ have good ordinary reduction modulo $p$.  Let $\dora{S}$ (resp. $\dhwa{S}$) be the density of primes of good ordinary reduction for $S$ (resp. Hodge-reduction for $S$), let $\dora{C,C'}$ be the set of primes where both $C,C'$ have good, ordinary reduction. Then 
$$\dora{S}=\dora{C,C'},$$
and
$$\dhwa{S}=min(1,\dora{C}+\dora{C'}-\dora{C,C'})\geq0,$$
(the non-negativity follows from the tautological inequalities $\dora{C}\geq \dora{C,C'}$ and $\dora{C'}\geq \dora{C,C'}$)
and in general $$\dhwa{S}\geq \dora{S}$$
with strict inequality possible by choosing $C'$ to have CM Jacobian by a CM field not isomorphic to the CM field corresponding to $C$.

For specific examples, consider $C:y^2=x^5+1$ (CM by $K=\Q(\zeta_5)$ and LMFDB Label 10000.b.800000.1) and $y^2 + x^3y = -2x^4 - 2x^3 + 2x^2 + 3x - 2$ (CM by $K'=\Q(\alpha)$ where $\alpha$ is a root of $ x^{4} - x^{3} + 2x^{2} + 4x + 3 =0$ and LMFDB Label 28561.a.371293.1).  These curves are not $\bar{\Q}$-isomorphic as can be seen from their non-isomorphic CM fields or from their Igusa invariants listed in the data for these curve provided by \cite{lmfdb}. Now $\dora{C,C'}$ corresponds precisely to the set of primes $p$ which are completely split in both the CM fields $K$ and $K'$. As both $K,K'$ are cyclic Galois extensions of $\Q$ (each of degree four), these fields are uniquely identified by their respective sets of primes which split completely \cite[Corollary 13.10]{neukirch-algebraic-book}. Let $K^+\subset K$ (resp. $K^{'+}\subset K'$) be the totally real subfield of $K$ (resp. $K'$). Then we have $[K:\Q]=4=2\cdot 2=[K:K'][K':\Q]$ (and a similar assertion for $K'$). From \cite[Table 3.3.1]{goren10}, one sees that a prime $p$ which splits completely in $K$ (resp. $K'$) splits completely in $K^+$ (resp. $K^{'+}$).   Because one is dealing with successive quadratic extensions $K\supset K^+\supset \Q$ (and similarly for $K'$), we can calculate the density of all the required sets of primes.  From this we  see that $\dora{C,C'}\neq \dora{C}$ or $\dora{C'}$ as $\dora{C,C'}$ is the density of the set of primes $p$ which are completely split in both the CM fields $K$ and $K'$.  Hence $\dhwa{S}>\dora{S}$. Thus, we have examples of surfaces of general type where \Cref{con:positive-density-hodge-witt} and Conjecture~\ref{serre} hold and provide different sets of primes.
\end{example}

\subsection{Abelian Threefolds}
\begin{example}\label{ex:ab-thrfld}
Let $E_1,E_2,E_3$ be the three  elliptic curves over $\Q$ with CM by pairwise non-isomorphic imaginary quadratic fields $K_1,K_2,K_3$ (which in particular implies that these elliptic curves are pairwise non-isomorphic). Consider the abelian threefold $X=E_1\times E_2\times E_3$ over $\Q$. Then \Cref{th:positive-density-hodge-witt-abelian-threefold} says that there exist infinitely many primes $p$ at which $X$ has Hodge-Witt reduction. By \Cref{pr:curves-are-hodge-witt} each of $E_j$ has Hodge-Witt reduction for all but finitely many primes $p$ and hence the method of proof of \Cref{th:equivalence} (i.e. by \cite[III, Prop 2.1(ii) and Prop 7.2(ii)]{ekedahl85}) we see that $X$ has Hodge-Witt reduction if and only if any pair of $E_1,E_2,E_3$ has ordinary reduction at $p$ i.e. if and only if $p$ splits completely in any pair of $K_1,K_2,K_3$. On the other hand, $X$ has ordinary reduction at $p$ if and only each of $E_1,E_2,E_3$ has ordinary reduction at $p$ i.e. if and only if $p$ splits completely in each of the fields $K_1,K_2,K_3$. From this the densities $\dhwa{X}$ and $\dora{X}$ can be easily calculated using the inclusion exclusion principle. This gives $\dhwa{X}=\frac{5}{8}$ and $\dora{X}=\frac{1}{8}$.
\end{example}

\subsection{Abelian fourfolds}
I thank the Referee for suggesting \Cref{ex:ab-fourfold}. I have also added \Cref{ex:ab-fourfold2} to provide interesting contrast.
\newcommand{\leg}[1]{\left(\frac{#1}{p}\right)}
\newcommand{\notdiv}{\hspace{-4pt}\not|\hspace{2pt}}
\begin{example}\label{ex:ab-fourfold}
It may happen that $X$ is an abelian variety with CM, but $\delta^{ord}_X=\delta^{hw}_X$ i.e. every prime of Hodge-Witt reduction is a prime of ordinary reduction equivalently  $X$ has no primes of almost ordinary reduction. To construct such an $X$, let $d_1,d_2,d_3,d_4$ mutually distinct square-free, negative integers (i.e.  $d_i<0$ for $i=1,2,3,4$) and $d_4=d_1\cdot d_2\cdot d_3$. Thus, $K_i=\Q(\sqrt{d_i})$ are four imaginary quadratic fields. Let $E_i$ be an elliptic curve with CM by $K_i$ and let $X=E_1\times E_2\times E_3\times E_4$. Let $\leg{d_i}$ be the Legendre symbol of $d_i$ with respect to $p$. Then $X$ has complex multiplication and if $p\notdiv d_4$ then we must have $\leg{d_1}\leg{d_2}\leg{d_3}\leg{d_4}=1$. This means three of the Legendre symbols cannot be equal to $-1$. If $p|d_4$ then $p$ divides one of the remaining $d_i$, say $p|d_1$. Then $E_1,E_4$ have supersingular reduction over $p$ and hence $X$ cannot have almost ordinary at primes over $p$. Hence, up to a finitely many possible exceptions, primes of ordinary and Hodge-Witt reduction coincide and so $\delta^{\rm ord}_X=\delta^{\rm hw}_X$ and for all but finitely many primes $p$, either $p$ is a prime of ordinary reduction or $X$ has $p$-rank $\leq 2=\dim(X)-2$ i.e. $p$ is a prime of non-Hodge-Witt reduction for $X$.
\end{example}

\begin{example}\label{ex:ab-fourfold2}
 Let $d_1,d_2,d_3,d_4$ be mutually coprime,  square-free, negative integers (i.e.  $d_i<0$ for $i=1,2,3,4$  and $(d_i,d_j)=1$  for all $i\neq j$). Thus, $K_i=\Q(\sqrt{d_i})$ are four imaginary quadratic fields, and  $K_i\cap K_j=\Q$ for all $i\neq j$. For simplicity, we assume that all these fields have class number one. For $1\leq i\leq 4$, let $E_i$ be an elliptic curve over $\Q$ with CM by $K_i$ and let $X=E_1\times E_2\times E_3\times E_4$. 	Then $X$ has good, ordinary reduction modulo $p$ if and only if $p$ splits completely  in all the fields $K_i$ (i.e. $\leg{d_i}=1$) for $1\leq i\leq 4$. Since each elliptic curve $E_i$ has Hodge-Witt reduction at all but finitely many primes (\Cref{pr:curves-are-hodge-witt}), and so $X$ has Hodge-Witt reduction if and only if any three of the $E_1,E_2,E_3,E_4$ have good ordinary reduction. Thus, in contrast to \Cref{ex:ab-fourfold}, one sees that $$\delta^{\rm ord}_X=\frac{1}{2^4}=\frac{1}{16} < \delta^{\rm hw}_X=\frac{5}{16}<1.$$
\end{example}

\subsection{A variety of general type with $\delta^{\rm ord}_X<\delta^{\rm hw}_X<\varepsilon$ and $\delta^{\rm hw}_X/\delta^{\rm ord}_X>c>1$ }
\begin{example}\label{ex:gen-type-negligible-den}
Let $11=p_1<p_2=13<p_3=17<\cdots<p_m$ be the first $m$ primes greater than $7$. Let  $K_j=\Q(\zeta_{p_j})$ for $j=1,\ldots,m$. Then the fields $K_j$ are pairwise linearly disjoint over $\Q$. Let $C_j$ be the Fermat curve  $C_j=F_{1,p_j}\subset \P^2$ of degree $p_j\geq 11$ for $j=1,\ldots,m$. Let 
\be\label{ex:gen-type-exmpl} X=\prod_{j=1}^m C_m.\ee
By \Cref{tab:fermat-reductions}, $C_j$ has good ordinary reduction if and only if  $q\cong 1\bmod(p_j)$ and hence $\delta^{ord}(C_j)=\frac{1}{p_j-1}$. 
Now by \cite[III, Prop 2.1(ii) and Prop 7.2(ii)]{ekedahl85}, $X$ has ordinary reduction modulo a prime $q>p_m$ if and only if all the factors  $C_j$ have good ordinary reduction modulo $q$.  Thus, by the Chinese Remainder Theorem, $q\cong 1\bmod{ (\prod_{j=1}^m p_j)}$ and so one has 
\be\label{eq:den1} \delta^{\rm ord}_X=\prod_{j=1}^m\frac{1}{p_j-1}.\ee
On the other hand, by \Cref{pr:curves-are-hodge-witt} and \cite[III, Prop 2.1(ii) and Prop 7.2(ii)]{ekedahl85}, $X$ has good Hodge-Witt reduction whenever any $m-1$ of the $C_j$ have good ordinary reduction. The set of such primes is a disjoint union of the set of primes $q\cong  1\bmod{ (\prod_{j=1}^m p_j)}$ (this is the set of primes of good ordinary reduction for $X$ and its density is given by \eqref{eq:den1}) and the sets of primes $q\cong  1\bmod{ (\prod_{j=1, j\neq i}^m p_j)}$, but $ q\not\cong 1\bmod{p_i}$ for each $i=1,2\ldots,m$ (each of these $m$ sets consists of primes of Hodge-Witt but non-ordinary reduction). The density of each of the latter sets is easily computed using their description. Thus, one has

\be\label{eq:den2}
\delta^{\rm hw}_X = \delta^\text{ord}_X + \sum_{j=1}^{m} \frac{p_j - 2}{\prod_{i=1}^{m} (p_i - 1)} 
= \frac{1 + \sum_{j=1}^{m} (p_j - 2)}{\prod_{i=1}^{m} (p_i - 1)}
= \frac{\sum_{j=1}^{m} p_j - 2m + 1}{\prod_{i=1}^{m} (p_i - 1)}.
\ee
Moreover, $\delta^{\rm non-hw}_X=1-\delta^{\rm hw}_X$.  For $m$ sufficiently large, the first density \eqref{eq:den1} can be estimated using the prime number theorem to be about $e^{-p_m}\approx m^{-m}$, and the second density \eqref{eq:den2} can be estimated using to be about $\frac{m^2\log(m)}{2}\cdot e^{-p_m}$.  Comparing the discrepancy in the growth of these two as $m\to\infty$, allows one to arrive at the following assertion: 

Given a real numbers $0<\varepsilon<1$, and $c>1$, there exists an integer $m_0=m_0(\varepsilon,c)\gg 1$ and a geometrically connected, smooth, projective variety $X=X(\varepsilon,c)$ defined over $\Q$ and given by \eqref{ex:gen-type-exmpl}, such that $X$ is of general type with $\dim(X)=m\geq m_0$, and one has  
$$0<\delta^{\rm ord}_X<\delta^{\rm hw}_X<\varepsilon,$$
$$\delta^{\rm non-hw}_X=1-\delta^{\rm hw}_X>1-\varepsilon,$$
and
$$\delta^{\rm hw}_X/\delta^{\rm ord}_X>c>1.$$

For example, for $\varepsilon=10^{-4}$ and $c=10$, one can take $m=5$.
\end{example}

\bibliographystyle{plainnat}
\bibliography{../../master/masterofallbibs.bib,
	../../master/master6
}
\end{document}